\newcommand{\red}[1]{#1}
\newcounter{rot}%\addtocounter{rot}{1}, \therot
\def\a{\alpha} \def\b{\beta}  
\def\e{\epsilon} \def\f{\phi}   \def\g{\gamma}
\def\G{\Gamma}  
\def\z{\zeta}     \def\l{\lambda}
\def\La{\Lambda}   
   \def\U{\Upsilon}
\def\cT{{\cal T}}
\newtheorem{theorem}{Theorem}
\newtheorem{lemma}[theorem]{Lemma}
\newcommand{\hW}[1]{\widehat{W}^{#1}}
\def\cT{{\mathcal T}}
\newcommand{\brac}[1]{\left(#1\right)}
\newcommand{\bfrac}[2]{\left(\frac{#1}{#2}\right)}
\newcommand{\set}[1]{\left\{#1\right\}}
\def\E{\mbox{{\bf E}}}
\def\Pr{\mbox{{\bf Pr}}}
\newcommand{\ignore}[1]{}
\newcommand{\beq}[2]{\begin{equation}\label{#1}#2\end{equation}}
\newcommand{\multstar}[1]{\begin{multline*}#1\end{multline*}}
\newcommand{\mult}[2]{\begin{multline}\label{#1}#2\end{multline}}
\newcommand{\dd}{\mathrm{d}}
\newcommand{\1}{\textbf{1}}
\def\hc{\hat{c}_0}
\def\hT{\hat{T}}
\def\hW{\hat{W}}
\def\tT{\tilde{T}}
\begin{document}

\title{A randomly weighted minimum spanning tree with a random cost constraint}
\author{Alan Frieze\thanks{Research supported in part by NSF grant DMS1661063}  and  Tomasz Tkocz\\Department of Mathematical Sciences\\Carnegie Mellon University\\Pittsburgh PA15217\\U.S.A.}
\maketitle
\begin{abstract}
We study the minimum spanning tree problem on the complete graph $K_n$ where an edge $e$ has a weight $W_e$ and a cost $C_e$, each of which is an independent copy of the random variable $U^\g$ where $\g\leq 1$ and $U$ is  the uniform $[0,1]$ random variable. There is also a constraint that the spanning tree $T$ must satisfy $C(T)\leq c_0$. We establish, for a range of values for $c_0,\g$, the asymptotic value of the optimum weight via the consideration of a dual problem. 
\end{abstract}

\bigskip

\begin{footnotesize}
\noindent {\em 2010 Mathematics Subject Classification.} 05C80, 90C27.

\noindent {\em Key words.} Random Minimum Spanning Tree, Cost Constraint.
\end{footnotesize}

\bigskip
\section{Introduction}
Let $U$ denoe the uniform $[0,1]$ random variable and let $0<\g\leq 1$. We consider the minimum spanning tree problem in the context of the complete digraph $\vec{K}_n$ where each edge has an independent copy of $U^\g$ for weight $W_e$ and an independent copy of $U^\g$ for cost $C_e$. Let $\cT$ denote the set of spanning trees of $\vec{K}_n$. The weight of a spanning tree $A$ is given by $W(T)=\sum_{e\in T}W_e$ and its cost $C(T)$ is given by $C(T)=\sum_{e\in T}C_e$. The problem we study is
\beq{prob}{
\text{Minimise }W(T)\text{ subject to }T\in\cT,\,C(T)\leq c_0,
}
where $c_0$ may depend on $n$. We let $W^*=W^*(c_0)=W(T^*)$ denote the optimum value to \eqref{prob}.

The unconstrained case of this question ($\g=1,c_0=\infty$) has been well studied: Frieze  \cite{F1}, Steele \cite{Ste1}, Janson \cite{Janson}, Penrose \cite{Pe}, Frieze and McDiarmid \cite{FM}, Frieze, Ruszink\'o and Thoma \cite{FRT}, Beveridge, Frieze and McDiarmid \cite{BFM}, Li and Zhang \cite{LZ} and Cooper, Frieze, Ince, Janson and Spencer \cite{CFIJS} and is well understood. For example, \cite{CFIJS} proves that if $L_n$ denotes the expected minimum weight of a spanning tree then
\[
L_n=\zeta(3)+\frac{c_1}{n}+\frac{c_2+o(1)}{n^{4/3}}
\]
for explicitly defined $c_1,c_2$. Here and throughout, $\zeta(s) = \sum_{k=1}^\infty k^{-s}$ is the zeta function. 

Equation \eqref{prob} defines a natural problem that has been considered in the literature, in the worst-case rather than the average case. See for example Aggarwal, Aneja and Nair \cite{AAN} and Guignard and Rosenwein \cite{GR} (for a directed version) and Goemans and Ravi \cite{GRa}.

We first consider the simpler case where $\g=1$. We need to make the following definitions:
\begin{equation}\label{eq:c1}
c_1 = \frac{1}{\sqrt{2}}\sum_{k=1}^{\infty} \frac{1}{k^{3/2}}\frac{\Gamma\left(k-\frac{1}{2}\right)}{k!}.
\end{equation}
\beq{f}{
f(\b)=\sum_{k=1}^\infty \frac{k^{k-2}}{k!}f_k(\b)
}
where
\beq{fk}{
f_k(\b)=\b^{1/2}\int_{x=0}^\b x^{k-3/2}e^{-kx}\dd x+\int_{x=\b}^\infty x^{k-1}e^{-kx}\dd x, \qquad \beta \geq 0
}
and $\Gamma(s) = \int_0^\infty x^{s-1}e^{-x}\dd x$ is the gamma function.
\begin{theorem}\label{th1}
The following hold w.h.p.:
\begin{enumerate}[(1)]
\item  If
\beq{c0}{
c_0\in \left[c_1(500\log n)^{1/2},\frac{c_1n}{(8000\log n)^{1/2}}\right]
}
then
\beq{main}{
W^*\approx \frac{c_1^2n}{4c_0}.
}
\item Suppose now that $c_0=\a n$ where $\a=O(1)$ is a positive constant. 
\begin{enumerate}[(i)]
\item If $\a>1/2$ then 
\[
W^*\approx \z(3)=\sum_{k=1}^\infty\frac{1}{k^3}.
\]
\item If $0<\a\leq 1/2$ and if $\b^*=\b^*(\a)$ is the solution to 
\beq{beta}{
f'(\b)=2\a,
}
\end{enumerate}
then
\beq{Wstar1}{
W^*\approx f(\b^*)-2\a\b^*.
}
\item Suppose now that $c_0=\a$ where $\a=O(1)$ is a positive constant. 
\begin{enumerate}[(i)]
\item If $\a<\z(3)$ then there is no feasible solution to \eqref{prob}.
\item If $\a>\z(3)$ and if $\b^*=\b^*(\a)$ is the solution to 
\beq{f-}{
f(\b)-\b f'(\b)=\a,
}
then
\beq{f++}{
W^*\approx \frac{f(\b^*)-\a}{2\b^*}n.
}
\end{enumerate}
\end{enumerate}
\end{theorem}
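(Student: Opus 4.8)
The plan is to relax the cost constraint in \eqref{prob}. For $\lambda\ge0$ let $T_\lambda$ be a minimum spanning tree of $K_n$ for the weights $W_e+\lambda C_e$ and set $M_\lambda=\min_{T\in\cT}\big(W(T)+\lambda C(T)\big)=\sum_{e\in T_\lambda}(W_e+\lambda C_e)$. For any $T$ with $C(T)\le c_0$ and any $\lambda\ge0$ we have $W(T)\ge W(T)+\lambda(C(T)-c_0)\ge M_\lambda-\lambda c_0$, so deterministically $W^*\ge\sup_{\lambda\ge0}(M_\lambda-\lambda c_0)$. The proof then has two halves: (a) evaluate $\E M_\lambda$ (and $\E W(T_\lambda)$, $\E C(T_\lambda)$) asymptotically for $\lambda$ in the relevant range, and show these quantities concentrate; (b) conclude --- the lower bound on $W^*$ by optimizing $\lambda$ in the dual, and a matching upper bound because, after a tiny feasibility-restoring perturbation of $\lambda$, $T_\lambda$ is itself a near-optimal feasible tree.

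\textbf{The engine: component counting for $M_\lambda$.} The plan for (a) is the Kruskal / layer-cake identity $M_\lambda=\int_0^\infty(\kappa_\lambda(t)-1)\,\dd t$, where $\kappa_\lambda(t)$ is the number of components of the graph on $[n]$ with edge set $\{e:W_e+\lambda C_e\le t\}$; this edge set is a $G(n,p_\lambda(t))$ with $p_\lambda(t)=\Pr(W_e+\lambda C_e\le t)=\tfrac{t^2}{2\lambda}$ for $0\le t\le\min(1,\lambda)$ and linear in $t$ beyond that, and $\E(\kappa(G(n,p))-1)=(1+o(1))\sum_{k\ge1}\tfrac{n k^{k-2}}{k!}(np)^{k-1}e^{-knp}$ when $np=O(1)$, so one changes variables $x=np_\lambda(t)$ in each range to evaluate the integral. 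Writing $\lambda=\tfrac{n}{2\beta}$ (the scaling needed when $c_0=\alpha$ is a constant in $n$), the two ranges of $t$ turn exactly into the two terms of $f_k(\beta)$, giving $\E M_{n/(2\beta)}=(1+o(1))\tfrac{n}{2\beta}f(\beta)$ uniformly for $\beta$ in a compact subset of $(0,\infty)$; for $\lambda=\Theta(1/n)$ one gets $(1+o(1))f(n\lambda/2)$ and for $1/n\ll\lambda\ll n$ one gets $(1+o(1))c_1\sqrt{\lambda n}$ --- the regimes behind parts (2) and (1). Finally, conditionally on the combined weights (hence on $T_\lambda$) each tree edge of combined weight $\le\min(1,\lambda)$ splits its value uniformly between its $W$- and $C$-parts; applying the layer-cake bookkeeping to these edges and using $\sum_k\tfrac{k^{k-2}}{k!}\big(\sqrt\beta\int_0^\beta x^{k-3/2}e^{-kx}\dd x\big)=2\beta f'(\beta)$ yields, with $\lambda=\tfrac{n}{2\beta}$,
\[
\E C(T_\lambda)=(1+o(1))\big(f(\beta)-\beta f'(\beta)\big),\qquad \E W(T_\lambda)=\E M_\lambda-\lambda\,\E C(T_\lambda)=(1+o(1))\tfrac{n}{2}f'(\beta)
\]
(the first also follows from $\E C(T_\lambda)=\tfrac{d}{d\lambda}\E M_\lambda$).

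\textbf{Completing part (3).} For (i): if $\alpha<\zeta(3)$, then by the classical theorem \cite{F1} that the random minimum spanning tree weight converges in probability to $\zeta(3)$ (applied to the costs $C_e$), $\min_{T\in\cT}C(T)\ge(1-o(1))\zeta(3)>\alpha$ \whp, so \eqref{prob} is infeasible; the dual sees this too, since $M_\lambda-\lambda\alpha\approx\lambda(\zeta(3)-\alpha)\to\infty$ as $\lambda\to\infty$. For (ii): with $c_0=\alpha$ the dual objective is $\E M_\lambda-\lambda\alpha=(1+o(1))\tfrac{n}{2\beta}\big(f(\beta)-\alpha\big)$ for $\lambda=\tfrac{n}{2\beta}$, and maximizing over $\beta>0$ gives the first-order condition $f(\beta^*)-\beta^*f'(\beta^*)=\alpha$, which is exactly \eqref{f-} (and, by the monotonicity of $f$ and of $\beta\mapsto f(\beta)-\beta f'(\beta)$ that the analysis establishes, has a unique solution for $\alpha>\zeta(3)$), with value $\tfrac{f(\beta^*)-\alpha}{2\beta^*}n$ --- this is \eqref{f++}. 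Note that at $\lambda^*=\tfrac{n}{2\beta^*}$ we have $\E C(T_{\lambda^*})=(1+o(1))\alpha$, so the optimal multiplier drives the expected cost of $T_{\lambda^*}$ to the budget, as complementary slackness predicts. The lower bound $W^*\ge(1-o(1))\tfrac{f(\beta^*)-\alpha}{2\beta^*}n$ \whp\ then follows from $W^*\ge M_{\lambda^*}-\lambda^*\alpha$ and the concentration of $M_{\lambda^*}$. For the upper bound, take $\lambda^+=\tfrac{n}{2\beta^*}(1-\varepsilon_n)^{-1}$ with $\varepsilon_n\to0$ slowly: since $\E C(T_\lambda)$ is decreasing in $\lambda$ with $\big|\tfrac{d}{d\lambda}\E C(T_\lambda)\big|\asymp1/n$ near $\lambda^*$, we get $\E C(T_{\lambda^+})\le\alpha-c\varepsilon_n$ for some constant $c>0$, which beats the fluctuations of $C(T_{\lambda^+})$, so $T_{\lambda^+}$ is feasible \whp, while $W(T_{\lambda^+})=(1+o(1))\E W(T_{\lambda^+})=(1+o(1))\tfrac{f(\beta^*)-\alpha}{2\beta^*}n$ \whp\ by concentration. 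Combining the two bounds gives $W^*\approx\tfrac{f(\beta^*)-\alpha}{2\beta^*}n$.

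\textbf{Main obstacle.} The crux is the concentration. A crude bounded-differences inequality over all $\binom{n}{2}$ edge variables gives deviations of order $n$ for $W(T_\lambda)=\Theta(n)$ and of order $1$ for $C(T_\lambda)=\Theta(1)$ --- both useless. One must first show that \whp\ only the $O(n\log n)$ edges of smallest combined weight can affect $T_\lambda$, $W(T_\lambda)$ or $C(T_\lambda)$ (equivalently, $T_\lambda$ uses no edge of combined weight above the connectivity threshold $\asymp\sqrt{\log n}$, and editing any other edge leaves $T_\lambda$ unchanged), and then apply concentration to the functional restricted to those $\tilde O(n)$ variables, giving deviations $\tilde O(\sqrt n)=o(n)$ --- and, for the cost, small enough to beat $\varepsilon_n$. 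The other delicate point is making the $\E(\kappa-1)$ estimate uniform in $t$ with genuine $(1+o(1))$ relative error, including the non-tree components and the tail $t\gtrsim\sqrt{\log n}$, because the budget $\E C(T_\lambda)=\Theta(1)$ is a lower-order term inside $\E M_\lambda=\Theta(n)$ and hence must be tracked with relative precision $o(1)$. Parts (1) and (2) go the same way, with $\lambda$ and the location of the $\Theta(1)$ level set of $t$ rescaled accordingly.
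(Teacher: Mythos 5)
Your lower-bound machinery — weak Lagrangian duality, Janson's layer-cake formula for $\E M_\lambda$ via component counting in $G(n,\hat p)$, the $\beta=n/(2\lambda)$ change of variables that produces $f(\beta)$, and concentration on the $\tilde O(n)$ short edges — matches the paper's Sections 3--4 closely. Where you genuinely diverge is in closing the duality gap. The paper invokes a structural Lagrangian theorem for constrained spanning trees (Theorem 3.1 of Guignard--Rosenwein, restated as its Theorem~\ref{th2}): there is a spanning tree $\tT$ with $W(\tT)\le\f(\l^*)\le W^*$ and $C(\tT)\le c_0+C_{\max}(\tT)$, i.e.\ a tree at the dual value that overshoots the budget by at most one edge's cost. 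The paper then shrinks the budget to $\hc=c_0-\d$ with $\d$ chosen to absorb the whp bound on $C_{\max}$ from Lemma~\ref{rem1}, so $\tT$ becomes feasible for the original budget with $W(\tT)\le\f_{\hc}(\hat\l)\approx\f_{c_0}(\l^*)$. This is a deterministic adjacency/LP fact about spanning-tree polytopes and requires no control of $C(T_\l)$ beyond the single-edge bound.

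Your alternative — inflate $\l$ to $\l^+=\l^*(1-\e_n)^{-1}$ so that $T_{\l^+}$ itself is feasible whp, then show $W(T_{\l^+})\approx\f(\l^*)$ — is in the right spirit, and your identities $\E C(T_\l)=\tfrac{d}{d\l}\E M_\l\approx f(\b)-\b f'(\b)$ and $\E W(T_\l)\approx \tfrac{n}{2}f'(\b)$ are algebraically consistent with \eqref{f-}--\eqref{f++}. But two steps in your sketch are more than ``bookkeeping.'' First, the paper's Lemma~\ref{lm:Elength} gives $\E M_\l$ only up to a multiplicative $1+O(1/\log n)$; differentiating such an approximation in $\l$ does not automatically give $\E C(T_\l)=(1+o(1))(f(\b)-\b f'(\b))$ with the precision you need, and the conditional ``uniform split'' of $Z_e$ into $W_e,C_e$ you invoke is only valid for $Z_e\le\min(1,\l)$, which in Case~(3) excludes most tree edges. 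You would need either a separate Janson-type computation of $\E C(T_\l)$ or a mean-value/monotonicity argument exploiting concavity of $\l\mapsto\E M_\l$, and the choice $\e_n$ must simultaneously dominate the $O(1/\log n)$ expectation error (forcing $\e_n\gg(\log n)^{-1/2}$) and the fluctuations of $C(T_{\l^+})$, while still tending to $0$. Second, you need a concentration bound for $C(T_\l)$ itself, not for $\f(\l)$; the paper's Lemma~\ref{lm:conc} only covers $\f(\l)$. Both gaps are plausibly fillable with the same log-Sobolev machinery, but they are additional technical steps your proposal names only as ``obstacles.'' The Guignard--Rosenwein theorem is exactly what lets the paper sidestep all of this; without it, what you have is a correct skeleton that still owes a quantitative analysis of $\E C(T_\l)$ and its concentration.
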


For the case $\g<1$ we will prove the following.
\begin{theorem}\label{thm:main-resx}
Suppose that
\beq{c0range}{
n^{1-\g}\log^{\g/2}n\ll c_0\ll \frac{n}{\log^{\g/2}n}.
}
Then the following holds w.h.p.
\beq{opta}{
W^*\approx \frac{C_\g^2n^{2-\g}}{4c_0},
}
where $C_\g=\frac{\g}{2}\frac{\Gamma(2/\g+1)^{\g/2}}{\Gamma(1/\g+1)^\g} \sum_{k=1}^\infty \frac{\Gamma(k+\g/2-1)}{k^{\g/2+1}k!}$.
\end{theorem}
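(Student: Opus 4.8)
The plan is to prove Theorem~\ref{thm:main-resx} by adapting the strategy used for part~(1) of Theorem~\ref{th1}, where the constrained problem with a "mid-range" value of $c_0$ is analyzed via Lagrangian duality and a careful understanding of the joint behavior of small weights and small costs. Fix a multiplier $\l>0$ and consider, for each edge $e$, the combined cost $W_e+\l C_e$. Since $W_e=U_1^\g$ and $C_e=U_2^\g$ with $U_1,U_2$ independent uniform $[0,1]$, the variable $W_e+\l C_e$ has a density near $0$ that behaves like that of a sum of two $\g$-powers; a minimum spanning tree $T_\l$ with respect to these combined weights can be analyzed using the multi-type branching process / limiting Poisson-weighted infinite tree (PWIT) technique of \cite{F1,Janson,CFIJS}, suitably extended to track both $W(T_\l)$ and $C(T_\l)$ simultaneously. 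The Lagrangian value $W(T_\l)+\l(C(T_\l)-c_0)$ gives a lower bound on $W^*$ for every $\l$, and the optimal $\l$ is chosen so that $C(T_\l)\approx c_0$.

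The key computational step is to identify, in the limit $n\to\infty$, the functions $\phi(\l):=\lim n^{\g-1}\E[W(T_\l)]$ and $\psi(\l):=\lim n^{\g-1}\E[C(T_\l)]$ (with appropriate normalization given the scaling $c_0$, $W^*$ in \eqref{c0range}, \eqref{opta}). By symmetry in $W_e\leftrightarrow C_e$, rescaling $\l$, one expects $\psi(\l)=\phi(1/\l)/\l$ or similar, and $\phi,\psi$ should be expressible through the same kind of series as $f$ in \eqref{f}, now with the exponent $\g/2$ reflecting the fact that the density of $U^\g$ near $0$ scales like $w^{1/\g-1}$, so the density of $W_e+\l C_e$ near zero scales like $w^{2/\g-1}$. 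One then optimizes: set $C(T_\l)=c_0$, i.e. $n^{1-\g}\psi(\l)=c_0$, solve for $\l=\l(c_0)$, and compute $W^*\approx n^{\g-1}\phi(\l)-\l(c_0-n^{1-\g}\psi(\l))=n^{\g-1}\phi(\l)$. In the regime \eqref{c0range} the relevant $\l$ is small (or large), so only the leading-order behavior of $\phi,\psi$ matters, and a Tauberian/asymptotic analysis of the defining series should collapse everything to the clean closed form \eqref{opta} with the constant $C_\g$; the factor $\frac14$ and the constant $C_\g$ will emerge from matching the leading term of $\phi$ against the square of the leading term of $\psi$, exactly as $c_1^2/(4c_0)$ arises in part~(1).

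For the matching upper bound I would exhibit a near-optimal feasible tree directly. Using the edges of $K_n$ whose combined weight $W_e+\l C_e$ is smallest, one builds a spanning tree $T$ via Kruskal/Prim; the analysis of \cite{F1} type shows $W(T)+\l C(T)$ concentrates around $n^{\g-1}(\phi(\l)+\l\psi(\l))$, and separately $C(T)$ concentrates around $n^{1-\g}\psi(\l)\approx c_0$. A small perturbation argument (swapping $o(n)$ edges, or using a slightly smaller $\l$ so that $C(T)\le c_0$ strictly w.h.p.) then yields a feasible tree with weight $\approx n^{\g-1}\phi(\l)$, matching the lower bound. Concentration for both $W(T)$ and $C(T)$ follows from the bounded-differences / Azuma-type arguments standard in this literature, with the constraints \eqref{c0range} (the $\log^{\g/2}n$ factors) being exactly what is needed to make the error terms negligible relative to the main term, just as \eqref{c0} does in Theorem~\ref{th1}.

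The main obstacle I anticipate is the rigorous two-parameter limit theory: tracking the joint limiting distribution of $(W(T_\l),C(T_\l))$ via the PWIT/branching-process machinery is more delicate than the one-parameter case, because the recursive distributional equations now involve a bivariate fixed point, and controlling the convergence of $n^{\g-1}\E[W(T_\l)]$ uniformly over the relevant range of $\l$ (which drifts to $0$ or $\infty$ with $n$) requires care with the tails of $U^\g$ for small $\g$. Establishing that the optimal Lagrangian bound is asymptotically tight — i.e. that there is no duality gap in the limit — is the conceptual heart of the argument, and mirrors the corresponding (nontrivial) step in the proof of Theorem~\ref{th1}(1).
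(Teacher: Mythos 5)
Your overall strategy --- Lagrangian duality, compute the dual objective, optimize over $\l$, then match with a feasible tree --- is the paper's strategy, and your observation that the $C_\g^2 n^{2-\g}/(4c_0)$ form arises from the quadratic optimization exactly as $c_1^2 n/(4c_0)$ does in Theorem~\ref{th1}(1) is correct. But your technical route diverges from the paper's and, as written, has genuine gaps.

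The engine is different and your version creates a harder problem than necessary. You propose computing $\lim n^{\g-1}\E W(T_\l)$ and $\lim n^{\g-1}\E C(T_\l)$ \emph{separately} via a PWIT / multi-type branching process analysis and a bivariate recursive distributional equation, then tuning $\l$ so that $C(T_\l)\approx c_0$. The paper never computes these two quantities separately: it only needs the scalar $\E L_n(\l)=\E\big[W(T_\l)+\l C(T_\l)\big]$, which it gets directly from Janson's integral formula $\E L_n=\int_0^\infty \E\big(\kappa(G_{n,\hat p(t)})-1\big)\,\dd t$ with $\hat p(t)=\Pr(W_e^\g+\l C_e^\g<t)$, a change of variables $q=\hat p(t)$, and the standard small-tree-component expansion of $\kappa(G_{n,q})$. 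The constant $C_\g$ emerges from $\Gamma$-function identities, and the maximizer $\l^*$ is found by simple calculus on $\E L_n(\l)-\l c_0$. The ``bivariate fixed point'' you flag as the main obstacle of your approach is thus entirely avoidable, and nothing in the paper requires a joint limit theory for $(W(T_\l),C(T_\l))$.

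More seriously, your upper-bound (duality-gap) step is missing its key ingredient. ``Swap $o(n)$ edges'' or ``use a slightly smaller $\l$'' is the right instinct but is not a proof: the second variant needs concentration of $C(T_\l)$ alone, which you do not establish, and the first needs a quantitative exchange lemma. The paper uses Theorem 3.1 of Goemans--Ravi \cite{GR} (Theorem~\ref{th2} here): there exists a spanning tree $\tT$ with $W(\tT)\le\f(\l^*)\le W^*$ and $C(\tT)\le c_0+C_{\max}(\tT)$, and then bounds $C_{\max}$ via the $t_0$-threshold from Janson's formula; shifting $c_0$ down by $t_0/\l^*$ (which the range \eqref{c0range} makes $o(c_0)$) then yields a feasible near-optimal tree. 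Without this, or an equivalent single-swap lemma for side-constrained spanning trees, your upper bound does not close. Relatedly, the concentration step cannot be plain bounded differences/Azuma: a single edge can change $L_n$ by up to $1+\l$, and $\l^*$ is as large as $\Theta(n^{\g}/\log^{\g}n)$ at the bottom of the $c_0$-range, so the naive exponent is useless. The paper truncates at $L=n^{\g/8-1}\E L_n$, shows the MST uses no edge with $Z_e>L$ with probability $1-o(n^{-900})$, and applies the symmetric log-Sobolev inequality of \cite{BLM03} to the truncated functional; the $\log^{\g/2}n$ factors in \eqref{c0range} are precisely what makes these error terms subdominant.
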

Note that $C_1=c_1$ and this implies that the expression in \eqref{opta} is consistent with the expression in \eqref{main}.

We will first concentrate on the case $\g=1$. After this, we will continue with the proof of Theorem \ref{thm:main-resx}. We note that a preliminary version containing the results for the case $\g=1$ appeared in \cite{SODA}. The weights and costs will therefore be uniform $[0,1]$ until we reach the more general case in Section \ref{g<1}. We will then prove Theorem \ref{thm:main-resx} as stated and then show how to extend this result to a wider class of distribution via a simple coupling argument from Janson \cite{Jan}.

\section{Outline Proof for $\g=1$}
We tackle \eqref{prob} by considering the dual problem:
\beq{dual}{
\text{Maximise }\f(\l)\text{ over }\l\geq 0,\text{ where }\f(\l)=\min\set{W(T)+\l(C(T)-c_0):\text{$T\in\cT$ }}.
}
We note that
\beq{dual1}{
\text{if $\l\geq 0$ and $T$ is feasible for \eqref{prob} then $\f(\l)\leq W(T)$.}
}
We will show that w.h.p. 
\beq{show}{
\text{that if $\l^*$ solves \eqref{dual} and $T^*$ solves \eqref{prob} then $\f(\l^*)\approx W(T^*)$. }
}
Here $A\approx B$ is an abbreviation for $A=(1+o(1))B$ as $n\to\infty$, assuming that $A=A(n),B=B(n)$.

We use a standard integral formula to compute $\f(\l)$ in Section \ref{secexpect}. This is straightforward, but lengthy. We then prove concentration around the mean in Section \ref{seconc}. We then use a result of \cite{GR} to show in Section \ref{secthm} that in the cases discussed, the duality gap is negligible w.h.p.
\subsection{Consistency in Theorem \ref{th1}}
Before continuing, we will check that the claims in Cases (2) and (3) are intuitively reasonable. First consider Case (2). If $\a>1/2$ and if $T^*$ is the tree minimising $W(T)$ then w.h.p. $W(T^*)\approx \z(3)$ and $C(T^*)\leq (1+o(1))n/2$.

We observe next that $f'(\b)>0$. This follows directly from 
\beq{f'b}{
f'(\b)=\frac12 \sum_{k=1}^\infty \frac{k^{k-2}}{k!}\beta^{-1/2}\int_0^\beta x^{k-3/2}e^{-kx} \dd x.
}
It is shown in an appendix that 
\beq{f''}{
f'(\b)\text{ is a strictly monotone decreasing function.}
}
{\red As such $f$ has a Lipschitz continuous inverse.} By inspection we see that $f'(\infty)=0$.

Note also that $f'(0)=1$  (use L'H\^opital's rule) and
\[
f(0)=\sum_{k=1}^\infty \frac{k^{k-2}}{k!}\int_{x=0 }^\infty x^{k-1}e^{-kx}\dd x= \sum_{k=1}^\infty \frac{k^{k-2}}{k!}\cdot\frac{(k-1)!}{k^k}=\z(3)
\]
and so \eqref{beta} and \eqref{Wstar1} are consistent with (i) when $\a=1/2$.

If $\a<1/2$ then from the above properties of $f'$ we see that \eqref{beta} has a unique positive solution. We derive expression \eqref{Wstar1} below.

Now consider Case (3). If $\a<\z(3)$ then w.h.p. there is no tree $T$ with $C(T)<\a$. If $g(\b)=f(\b)-\b f'(\b)$, then $g(0) = \zeta(3)$, $g'(\b)=-\b f''(\b)>0$ and 
\[
g(\b)\geq\frac{\b^{1/2}}{2} \sum_{k=1}^\infty \frac{k^{k-2}}{k!}\int_{x=0}^\b x^{k-3/2}e^{-kx}\dd x\to\infty\text{ as }\b\to\infty.
\]
This implies that \eqref{f-} has a unique positive solution. We derive expression \eqref{f++} below.

\section{Evaluation of the dual problem}
\subsection{Expectation}\label{secexpect}
\begin{lemma}\label{lm:Elength}
Let $\lambda \geq 0$ and let {\red $L_n=L_n(\l)$} be the total weight of a minimum spanning tree in the complete graph on $n$ vertices with each edge $e$ having weight $Z_e = W_e + \lambda C_e$, where $W_e$ and $C_e$ are i.i.d. random variables uniform on $[0,1]$. We have
\begin{enumerate}[{\red a.}]
\item If $\frac{2000\log n}{n} \leq \lambda \leq \frac{n}{2000\log n}$, then
\begin{equation}\label{eq:EL-formula-lambda-mid}
\E L_n \approx c_1\sqrt{\lambda n}.
\end{equation}
\item If $\lambda < \frac{2000\log n}{n}$, then
\begin{equation}\label{eq:EL-formula-lambda-small}
\E L_n\approx \sum_{k=1}^{\infty}\frac{k^{k-2}}{k!}\Big[\sqrt{\frac{\lambda n}{2}}\int_{0}^{\frac{\lambda n}{2}}x^{k-3/2}e^{-kx} \dd x + \int_{\frac{\lambda n}{2}}^{\infty}x^{k-1}e^{-kx} \dd x\Big].
\end{equation}
\item If $\lambda > \frac{n}{2000\log n}$, then
\begin{equation}\label{eq:EL-formula-lambda-large}
\E L_n \approx \sum_{k=1}^{\infty}\frac{k^{k-2}}{k!} \lambda\Big[\sqrt{\frac{n}{2\lambda}}\int_{0}^{\frac{n}{2\lambda}}x^{k-3/2}e^{-kx} \dd x + \int_{\frac{n}{2\lambda}}^{\infty}x^{k-1}e^{-kx} \dd x\Big].
\end{equation}
\end{enumerate}
The implied $o(1)$ terms in the above expressions can be taken to be independent of $\l$. Also, we have not optimised all constants.
\end{lemma}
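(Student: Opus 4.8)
The plan is to invoke the standard integral representation of the minimum-spanning-tree weight through the component structure of the threshold graphs. For $x\ge 0$ let $G_x$ be the (random) graph on vertex set $[n]$ with edge set $\{e:Z_e\le x\}$; since the $Z_e$ are i.i.d., $G_x$ is distributed as $G(n,p(x))$ with $p(x)=\Pr(Z_e\le x)$, and $p(x)$ is the CDF of the convolution of a uniform $[0,1]$ and a uniform $[0,\lambda]$ random variable. Thus, when $\lambda\le 1$, $p(x)=\frac{x^2}{2\lambda}$ on $[0,\lambda]$, $p(x)=x-\frac\lambda2$ on $[\lambda,1]$, and $p(x)=1-\frac{(1+\lambda-x)^2}{2\lambda}$ on $[1,1+\lambda]$; when $\lambda>1$ we instead write $Z_e=\lambda(C_e+\lambda^{-1}W_e)$, so that $p(x)=\frac{x^2}{2\lambda}$ on $[0,1]$ and $p(x)=\frac x\lambda-\frac1{2\lambda}$ on $[1,\lambda]$. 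Writing $\kappa(x)$ for the number of components of $G_x$, Kruskal's algorithm (the edges of the MST of weight $>x$ are exactly those needed to connect up the components of $G_x$) gives $L_n=\int_0^\infty(\kappa(x)-1)\,\dd x$, so that
\[
\E L_n=\int_0^\infty\big(\E\kappa(x)-1\big)\,\dd x .
\]

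The analytic heart of the proof is to show that over the range of $x$ that contributes, $\E\kappa(x)-1$ is well approximated by the expected number of tree components of $G_x$:
\[
\E\kappa(x)-1=(1+o(1))\sum_{k\ge1}\frac{n^k}{k!}\,k^{k-2}\,p(x)^{k-1}\,e^{-k\,n\,p(x)} ,
\]
with the approximation good enough, uniformly, to survive integration. I would establish this by: (i) computing exactly $\E[\#\{\text{tree components of size }k\}]=\binom nk k^{k-2}p^{k-1}(1-p)^{\binom k2-(k-1)+k(n-k)}$ and showing that for $k$ not too large and $np$ bounded this equals $(1+o(1))\frac{n^k}{k!}k^{k-2}p^{k-1}e^{-knp}$; (ii) bounding the expected number of components of size $k$ that contain a cycle, which gains an extra factor $O(p k^2)=O(k^2/n)$ in the relevant window and so is negligible after summing on $k$; (iii) showing that the portion of $\int_0^\infty(\kappa(x)-1)\,\dd x$ coming from the range of $x$ where $np(x)$ is large — in particular the range at and beyond the connectivity threshold, where a giant component appears and the tree expansion fails — contributes $o(\E L_n)$; this is precisely where the hypotheses on $\lambda$ enter, since they force the window $np(x)=O(1)$ (equivalently $x=O(\sqrt{\lambda/n})$, which by $\frac{2000\log n}{n}\le\lambda\le\frac n{2000\log n}$ lies strictly inside the quadratic piece of $p$) to sit a polylogarithmic factor below the connectivity threshold, and make the leftover integral $O(\sqrt{\lambda n}/\mathrm{polylog}\,n)$ or smaller; and (iv) truncating the sum over $k$ at a slowly growing $K$, with the tail controlled by $k^{k-2}/k!\le e^k$ together with the $e^{-knp}$ factor. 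I expect step (iii) — making the large-$np$ and giant-component contributions provably negligible, uniformly in $\lambda$ — to be the main obstacle; it is the reason the constants in the hypotheses are not optimised.

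Granting the approximation, the three formulas follow from the single substitution $u=n\,p(x)$, carried out piecewise according to the explicit form of $p$. In case (a) the whole contributing window lies in the region $p(x)=\frac{x^2}{2\lambda}$, where $u=\frac{nx^2}{2\lambda}$ gives $\dd x=\sqrt{\lambda/(2nu)}\,\dd u$, so the $k$-th summand becomes $\sqrt{\lambda n/2}\int_0^\infty u^{k-3/2}e^{-ku}\,\dd u=\sqrt{\lambda n/2}\,\Gamma(k-\tfrac12)\,k^{-(k-1/2)}$ (the upper limit $n\,p(\min(\lambda,1))$ being replaced by $\infty$ with exponentially small error, since that limit is $\gg\log n$ under the hypotheses), and summing against $\frac{k^{k-2}}{k!}$ yields $c_1\sqrt{\lambda n}$ by \eqref{eq:c1}. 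In case (b), $p(x)=\frac{x^2}{2\lambda}$ for $x\le\lambda$ and $p(x)=x-\frac\lambda2$ for $\lambda\le x\le 1$, the two pieces meeting at $u=n\,p(\lambda)=\lambda n/2$; the first piece contributes $\sqrt{\lambda n/2}\int_0^{\lambda n/2}u^{k-3/2}e^{-ku}\,\dd u$ as above, while on the second $u=n(x-\frac\lambda2)$ gives $\dd x=\dd u/n$ and the contribution $\int_{\lambda n/2}^\infty u^{k-1}e^{-ku}\,\dd u$ (extending the upper limit past $x=1$ introduces only negligible error), and summing produces \eqref{eq:EL-formula-lambda-small}. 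Case (c) is identical after the rescaling, splitting at $x=1$ with crossover $u=n\,p(1)=n/(2\lambda)$: the affine piece $p(x)=\frac x\lambda-\frac1{2\lambda}$ gives $\dd x=\frac\lambda n\,\dd u$, which is exactly where the outer factor $\lambda$ in \eqref{eq:EL-formula-lambda-large} comes from. Finally, since in every estimate the error bounds depend on $\lambda$ only through which of the three regimes it lies in (and through $np(x)$, which is integrated out), the $o(1)$ terms can be taken uniform in $\lambda$.
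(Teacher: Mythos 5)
Your proposal is correct and follows the paper's approach: Janson's integral formula for the MST weight, the explicit piecewise CDF for the convolved edge weight, the decomposition of $\E\kappa(G_{n,p})$ into tree / unicyclic / large components with only the trees contributing at leading order, and the substitution $u=np(x)$ carried out piecewise to produce the three formulas. The only cosmetic difference is that you state the tree-component approximation of $\E\kappa(x)-1$ as a pointwise $(1+o(1))$ identity, whereas the paper keeps the ``$-1$'' separate and absorbs it into an $O(\sqrt{\log n/n})$ error after integrating over the window $q\le 1000\log n/n$; this does not affect the argument.
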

\begin{proof}
Let $T$ be a minimum spanning tree. The starting point is Janson's formula \cite{Janson},
\beq{formula}{
\E L_n = \E \sum_{e \in T} Z_e = \E \sum_{e \in T}\int_{0}^\infty \1_{\{Z_e \geq p\}} \dd p = \int_0^\infty\E |\{e \in T, \ Z_e \geq p\}| \dd p = \int_0^\infty \E \big(\kappa(G) - 1\big) \dd p,
}
where $\kappa(G)$ is the number of components in the {\red random} graph $G$ on $n$ vertices with the edge set $\{e: \ Z_e < p\}$. Since the $Z_e$ are i.i.d., this is the random graph $G_{n,\hat p}$, with $\hat p = \Pr\brac{Z_e < p}$. Since $Z_e \leq 1 + \lambda$, $\hat p = 1$ for $p > 1+\lambda$, so the last integral can be taken from $0$ to $1+\lambda$ and after a change of variables $p\gets \frac{p}{1+\l}$, we get
\begin{equation}\label{eq:EL}
\E L_n = (1+\lambda)\int_0^1 \E \big(\kappa(G_{n,\hat p(p)}) - 1\big) \dd p,
\end{equation}
where
\begin{align*}
\hat p(p) = \Pr(Z_e < (1+\lambda)p) &= \Pr\brac{\frac{1}{1+\lambda} W_e + \frac{1}{1+\lambda^{-1}} C_e < p} \\
&= \left|\left\{(u,v) \in [0,1]^2, \ \frac{1}{1+\lambda}u+\frac{1}{1+\lambda^{-1}} v \leq p\right\}\right|
\end{align*}
where in the last expression $|\cdot|$ denotes Lebesgue measure. An elementary computation (given in an appendix) yields
\begin{equation}\label{eq:phat}
\hat p(p) = \begin{cases} \frac{(1+\lambda)(1+\lambda^{-1})}{2}p^2, & \qquad\qquad\ \ \ 0 \leq p \leq \frac{1}{1+\max\{\lambda,\lambda^{-1}\}} \\
-\frac12\min\{\lambda,\lambda^{-1}\} + p(1+\min\{\lambda,\lambda^{-1}\}), & \frac{1}{1+\max\{\lambda,\lambda^{-1}\}} < p \leq \frac{1}{1+\min\{\lambda,\lambda^{-1}\}} \\
1 - \frac{(1+\lambda)(1+\lambda^{-1})}{2}(1-p)^2, & \frac{1}{1+\min\{\lambda,\lambda^{-1}\}} < p \leq 1\end{cases}
\end{equation}
{\red For convenience, we also include an expression for the inverse function (we need this later when we change variables in integration).
\begin{equation}\label{eq:phat-inv}
p(\hat p) = \begin{cases} \sqrt{\frac{2}{(1+\lambda)(1+\lambda^{-1})}}\sqrt{\hat p}, & \ \qquad\qquad\qquad\qquad\qquad\qquad\ \ \ 0 \leq \hat p \leq \frac{1+\min\{\lambda,\lambda^{-1}\}}{2(1+\max\{\lambda,\lambda^{-1})\}}, \\
\frac{\hat p + \frac{1}{2}\min\{\lambda,\lambda^{-1}\}}{1+\min\{\lambda,\lambda^{-1}\}}, & -\frac{1}{2}\min\{\lambda,\lambda^{-1}\} + \frac{1+\min\{\lambda,\lambda^{-1}\}}{1+\max\{\lambda,\lambda^{-1}\}} < \hat p \leq -\frac{1}{2}\min\{\lambda,\lambda^{-1}\} +1, \\
1-\sqrt{\frac{2}{(1+\lambda)(1+\lambda^{-1})}}\sqrt{1-\hat p}, & \qquad\qquad\quad -\frac{1}{2}\min\{\lambda,\lambda^{-1}\} +1 < \hat p \leq 1.\end{cases}
\end{equation}
}
Now we can proceed with evaluating $\E L_n$ given by \eqref{eq:EL}. First observe that  if {\red $q\in \left[\frac{1000\log n}{n},1\right]$} then we have 
\begin{equation}\label{eq:kappa-triv}
\E \kappa(G_{n,q}) = 1 + o(n^{-200}).
\end{equation}
This is because
\begin{align}
1 \leq \E \kappa(G_{n,q}) &\leq 1 + n\Pr(G_{n,q} \text{ is not connected})\nonumber \\
&\leq 1 + n\sum_{k=1}^{n/2} \binom{n}{k}k^{k-2}q^{k-1}(1-q)^{k(n-k)} \nonumber \\
&\leq 1 + \frac{n}{q}\sum_{k=1}^{n/2} \left(\frac{en}{k}\right)^kk^ke^{-qk(n-k)}\nonumber  \\
&\leq 1 + \frac{n^2}{1000\log n}\sum_{k=1}^{n/2}\left(en e^{-\frac{1000\log n}{n}\frac{n}{2}}\right)^k\nonumber  \\
&\leq 1 + \frac{n^3}{1000\log n}\frac{e}{n^{499}}\nonumber  \\
&= 1 + o(n^{-200}).\label{100}
\end{align}
Therefore we can distinguish the following cases depending on the value of $\lambda$.

\bigskip
\noindent\textbf{Case 1.} $\frac{2000\log n}{n} \leq \lambda \leq \frac{n}{2000\log n}$. Note that then 
$$\hat p \left(\frac{1}{1+\max\{\lambda,\lambda^{-1}\}}\right) = \frac{1}{2}\frac{1+\min\{\lambda,\lambda^{-1}\}}{1+\max\{\lambda,\lambda^{-1}\}} = \frac{1}{2}\min\{\lambda,\lambda^{-1}\} \geq \frac{1000\log n}{n},$$
so by \eqref{eq:kappa-triv}, the integration over the second and third range from \eqref{eq:phat} gives the contribution $(1+\lambda)o(n^{-100})$ in \eqref{eq:EL}. Consequently,
\[
\E L_n = (1+\lambda)\int_0^{\frac{1}{1+\max\{\lambda,\lambda^{-1}\}}} \E \big(\kappa(G_{n,\frac{(1+\lambda)(1+\lambda^{-1})}{2}p^2}) - 1\big) \dd p + (1+\lambda)o(n^{-200}).
\]
By the same reason, we also have
\[
(1+\lambda)\int_{\frac{1}{1+\max\{\lambda,\lambda^{-1}\}}}^{\sqrt{\frac{2}{(1+\lambda)(1+\lambda^{-1})}}} \E \big(\kappa(G_{n,\frac{(1+\lambda)(1+\lambda^{-1})}{2}p^2}) - 1\big) \dd p = \sqrt{2\lambda}o(n^{-200}).
\]
Thus
\begin{align*}
\E L_n &= (1+\lambda)\int_0^{\sqrt{\frac{2}{(1+\lambda)(1+\lambda^{-1})}}} \E \big(\kappa(G_{n,\frac{(1+\lambda)(1+\lambda^{-1})}{2}p^2}) - 1\big) \dd p + (1+\sqrt{2\lambda}+\lambda)o(n^{-200}) \\
&=  (1+\lambda)\int_0^{\sqrt{\frac{2}{(1+\lambda)(1+\lambda^{-1})}}} \E \big(\kappa(G_{n,\frac{(1+\lambda)(1+\lambda^{-1})}{2}p^2}) - 1\big) \dd p + o(n^{-100}).
\end{align*}
Changing the variables yields
\begin{equation}\label{eq:EL2}
\E L_n = \sqrt{\frac{\lambda}{2}}\int_0^{1} \E \big(\kappa(G_{n,q}) - 1\big) \frac{\dd q}{\sqrt{q}} + o(n^{-100}).
\end{equation}
It remains to deal with the integral $\int_0^{1} \E \big(\kappa(G_{n,q}) - 1\big) \frac{\dd q}{\sqrt{q}}$. As before, thanks to \eqref{eq:kappa-triv}, we have
\begin{equation}\label{eq:kappaGnq}
\int_0^{1} \E \big(\kappa(G_{n,q}) - 1\big) \frac{\dd q}{\sqrt{q}} = \int_0^{\frac{1000\log n}{n}} \E \big(\kappa(G_{n,q}) - 1\big) \frac{\dd q}{\sqrt{q}} + o(n^{-100}).
\end{equation}
Decompose
\begin{equation}\label{eq:decomp}
\kappa(G_{n,q}) = \sum_{k=1}^{k_0} A_k + \sum_{k=3}^{k_0} B_k + R,
\end{equation}
where $A_k$ is the number of components which are $k$ vertex trees, $B_k$ is the number of non-tree components on $k$ vertices and $R$ is the number of components on at least $k_0$ vertices. Here we set $k_0 = \log n$.

For the tree components, we have
\begin{equation}\label{eq:Ak-gen}
\E A_k = \binom{n}{k}k^{k-2}q^{k-1}(1-q)^{k(n-k)+\binom{k}{2}-k+1}.
\end{equation}
For $q \leq \frac{1000\log n}{n}$ and $k \leq \log n$, we have $(1-q)^{-k^2+\binom{k}{2}-k+1} \leq e^{qk^2} \leq e^{\frac{1000(\log n)^3}{n}} = 1+o(1)$ and $\binom{n}{k} = (1+o(1))\frac{n^k}{k!}$, hence
\[
\E A_k = (1+o(1))\frac{n^k}{k!}k^{k-2}q^{k-1}(1-q)^{kn}.
\]
Thus
\begin{align}\notag
\int_0^{\frac{1000\log n}{n}} \E \big(\sum_{k=1}^{\log n}A_k - 1\big) \frac{\dd q}{\sqrt{q}} = (1+o(1))\sum_{k=1}^{\log n}\int_0^{\frac{1000\log n}{n}}\frac{n^k}{k!}k^{k-2}q^{k-1}&(1-q)^{kn}\frac{\dd q}{\sqrt{q}} \\
&+ O\left(\sqrt{\frac{\log n}{n}}\right)\label{eq:trees}.
\end{align}
Setting $q = \frac{x}{n}$ gives
\[
\int_0^{\frac{1000\log n}{n}}\frac{n^k}{k!}k^{k-2}q^{k-1}(1-q)^{kn}\frac{\dd q}{\sqrt{q}} = \sqrt{n}\frac{k^{k-2}}{k!}\int_0^{2000\log n} x^{k-1}\left(1 - \frac{x}{n}\right)^{kn} \frac{\dd x}{ \sqrt{x}}.
\]
Using $1-t = e^{-t + O(t^2)}$ as $t \to 0$, for $x \leq 1000\log n$ and $k \leq \log n$, we have $\left(1 - \frac{x}{n}\right)^{kn} = e^{-kx + O(\frac{(\log n)^3}{n})} = (1 + o(1))e^{-kx}$. Therefore
\begin{align*}
\int_0^{\frac{1000\log n}{n}} \E \big(\sum_{k=1}^{\log n}A_k - 1\big) \frac{\dd q}{\sqrt{q}} = (1+o(1))\sqrt{n}\sum_{k=1}^{\log n}\frac{k^{k-2}}{k!}\int_0^{1000\log n}&x^{k-1}e^{-kx}\frac{\dd x}{\sqrt{x}}\\
&+ O\left(\sqrt{\frac{\log n}{n}}\right).
\end{align*}
If the integral was from $0$ to $\infty$, we could express it using the gamma function. Since, crudely $\frac{1}{\sqrt{x}} \leq 1$ on the domain of integration,
\begin{align*}
\sqrt{n}\sum_{k=1}^{\log n}\frac{k^{k-2}}{k!}\int_{1000\log n}^\infty x^{k-1}e^{-kx}\frac{\dd x}{\sqrt{x}} &\leq \sqrt{n} \sum_{k=1}^{\log n}\frac{k^{k-2}}{k!}\int_{1000\log n}^\infty x^{k-1}e^{-kx}\dd x
\end{align*}
and for $k=1$ on the right hand side we get $\sqrt{n}e^{-1000\log n} = o(n^{-900})$, whereas for $k \geq 2$ we get
\begin{align*}
\sqrt{n} &\sum_{k=2}^{\log n}\frac{k^{k-2}}{k!}\int_{1000\log n}^\infty x^{k-1}e^{-x}e^{-(k-1)\cdot 1000\log n}\dd x \\
&\leq O(n^{1001})\sum_{k=2}^{\log n} \frac{k^{k-2}}{k!}(k-1)!n^{-1000k} \\
&\leq O(n^{1001})\sum_{k=2}^{\log n} \left(\frac{k}{n^{1000}}\right)^k = O(n^{-500}).
\end{align*}
We can conclude that
\[
\int_0^{\frac{1000\log n}{n}} \E \big(\sum_{k=1}^{\log n}A_k - 1\big) \frac{\dd q}{\sqrt{q}} = (1+o(1))\sqrt{n}\sum_{k=1}^{\log n}\frac{k^{k-2}}{k!}\int_0^{\infty}x^{k-3/2}e^{-kx}\dd x 
+ O\left(\sqrt{\frac{\log n}{n}}\right).
\]
It remains to compute the sum over $k$. We have
\begin{equation}\label{sum}
\sum_{k=1}^{\log n}\frac{k^{k-2}}{k!}\int_0^{\infty}x^{k-3/2}e^{-kx}\dd x = \sum_{k=1}^{\log n}\frac{k^{k-2}}{k!}\frac{\sqrt{k}}{k^k}\Gamma\left(k-\frac{1}{2}\right) = \sum_{k=1}^{\log n} \frac{1}{k^{3/2}}\frac{\Gamma\left(k-\frac{1}{2}\right)}{k!}.
\end{equation}
Since for $k \geq 3$, $\Gamma(k-1/2) \leq \Gamma(k) = (k-1)!$, the series converges and we have
\begin{equation}\label{eq:Ak}
\int_0^{\frac{100\log n}{n}} \E \big(\sum_{k=1}^{\log n}A_k - 1\big) \frac{\dd q}{\sqrt{q}} = (1+o(1))a_0\sqrt{n},
\end{equation}
where
\begin{equation}\label{eq:c0}
a_0 = \sum_{k=1}^{\infty} \frac{1}{k^{3/2}}\frac{\Gamma\left(k-\frac{1}{2}\right)}{k!}.
\end{equation}

To bound the contribution form non-tree components, note that
\begin{equation}\label{eq:Bk-gen}
\E B_k \leq \binom{n}{k}k^{k}q^{k}(1-q)^{k(n-k)} \leq \big[enqe^{-qn}\big]^ke^{qk^2}.
\end{equation}
Thus
\begin{align*}
\int_0^{\frac{1000\log n}{n}} \E \big(\sum_{k=3}^{\log n}B_k\big) \frac{\dd q}{\sqrt{q}} &\leq e^{\frac{1000\log n}{n}(\log n)^2} \sum_{k=3}^{\log n} \int_0^{\frac{1000\log n}{n}} \big[enqe^{-qn}\big]^k \frac{\dd q}{\sqrt{q}} \\
&\leq (1 + o(1)) (\log n)\int_0^{\frac{1000\log n}{n}} \big[enqe^{-qn}\big]^3 \frac{\dd q}{\sqrt{q}} \\
&= O(\log n) \frac{1}{\sqrt{n}}\int_0^{1000 \log n} x^{5/2}e^{-3x} \dd x, 
\end{align*}
so
\begin{equation}\label{eq:nontrees}
\int_0^{\frac{1000\log n}{n}} \E \big(\sum_{k=3}^{\log n}B_k \big) \frac{\dd q}{\sqrt{q}} = O\left(\frac{\log n}{\sqrt{n}}\right).
\end{equation}

Finally, for the large components, since 
\begin{equation}\label{eq:R-gen}
R \leq \frac{n}{k_0},
\end{equation}
we get $R \leq \frac{n}{\log n}$, so we have
\begin{equation}\label{eq:R}
\int_0^{\frac{1000\log n}{n}} \E \big(R \big) \frac{\dd q}{\sqrt{q}} \leq 2\sqrt{\frac{1000\log n}{n}}\frac{n}{\log n} = O\left(\frac{\sqrt{n}}{\sqrt{\log n}}\right).
\end{equation}

Combining \eqref{eq:Ak}, \eqref{eq:nontrees}, \eqref{eq:R} with \eqref{eq:decomp} and plugging into \eqref{eq:kappaGnq}, we obtain
\[
\int_0^{1} \E \big(\kappa(G_{n,q}) - 1\big) \frac{\dd q}{\sqrt{q}} = (1+o(1))c_0\sqrt{n}.
\]
In view of \eqref{eq:EL2} this gives \eqref{eq:EL-formula-lambda-mid}.

\bigskip
\noindent\textbf{Case 2.} $\lambda < \frac{2000\log n}{n}$. Then plainly $\min\{\lambda,\lambda^{-1}\} = \lambda$ and $\max\{\lambda,\lambda^{-1}\} = \lambda^{-1}$. Since $\hat{p}(p) \geq \hat{p}(\frac{1}{1+\lambda}) = 1 - \frac{\lambda}{2}$, for $p \geq \frac{1}{1+\lambda}$, in view of \eqref{eq:kappa-triv}, the third range in \eqref{eq:phat}, that is $\frac{1}{1+\lambda} < p \leq 1$, gives the contribution $(1+\lambda)o(n^{-200}) = o(n^{-200})$ in \eqref{eq:EL}. For the remaining two ranges, changing the variables $q = \hat{p}(p)$ in \eqref{eq:EL} gives
\[
\E L_n = \sqrt{\frac{\lambda}{2}}\int_0^{\lambda/2}\E\Big[\kappa(G_{n,q})-1\Big] \frac{\dd q}{\sqrt{q}} + \int_{\lambda/2}^{1 - \lambda/2}\E\Big[\kappa(G_{n,q})-1\Big] \dd q + o(n^{-100}).
\]
By \eqref{eq:kappa-triv}, for the second integral we get
\[
\int_{\lambda/2}^{1 - \lambda/2}\E\Big[\kappa(G_{n,q})-1\Big] \dd q = \int_{\lambda/2}^{\frac{1000\log n}{n}}\E\Big[\kappa(G_{n,q})-1\Big] \dd q + o(n^{-200}),
\]
so
\begin{equation}\label{eq:EL-case2}
\E L_n = \sqrt{\frac{\lambda}{2}}\int_0^{\lambda/2}\E\Big[\kappa(G_{n,q})-1\Big] \frac{\dd q}{\sqrt{q}} + \int_{\lambda/2}^{\frac{1000 \log n}{n}}\E\Big[\kappa(G_{n,q})-1\Big] \dd q + o(n^{-100}).
\end{equation}
We again decompose $\kappa(G_{n,q})$ as in \eqref{eq:decomp}. Here we set $k_0 = (\log n)^2$. First we show that the $B_k$ and $R$ have small contribution in the integrals above. By \eqref{eq:Bk-gen}, 
\begin{align*}
\sqrt{\frac{\lambda}{2}}\int_0^{\lambda/2}\E\Big[\sum_{k=3}^{k_0} B_k\Big] \frac{\dd q}{\sqrt{q}} &\leq \sum_{k=3}^{k_0} \sqrt{\frac{\lambda}{2}}\int_0^{\lambda/2} \big[enqe^{-qn}\big]^ke^{qk^2} \frac{\dd q}{\sqrt{q}} \\
&\leq e^{\frac{\lambda}{2}k_0^2}\sum_{k=3}^{k_0} \sqrt{\frac{\lambda}{2}}\int_0^{\infty} \big[enqe^{-qn}\big]^k \frac{\dd q}{\sqrt{q}} \\
&\leq e^{\frac{1000(\log n)k_0^2}{n}}\sqrt{\frac{\lambda}{2}}\sum_{k=3}^{k_0} \frac{1}{\sqrt{n}}\int_0^{\infty} \big[exe^{-x}\big]^k \frac{\dd x}{\sqrt{x}} \\
&\leq e^{\frac{1000(\log n)k_0^2}{n}}\sqrt{\frac{1000\log n}{n}} \frac{k_0}{\sqrt{n}}\int_0^{\infty} \big[exe^{-x}\big]^3 \frac{\dd x}{\sqrt{x}} \\
&= O\left(\frac{(\log n)^{5/2}}{n}\right).
\end{align*}
and similarly
\begin{align*}
\int_{\lambda/2}^{\frac{1000\log n}{n}}\E\Big[\sum_{k=3}^{k_0} B_k\Big] \dd q &\leq \sum_{k=3}^{k_0}\int_{\lambda/2}^{\frac{1000\log n}{n}} \big[enqe^{-qn}\big]^ke^{qk^2}\dd q \\
&\leq e^{\frac{1000(\log n)k_0^2}{n}}\frac{k_0}{n}\int_0^\infty \big[exe^{-x}\big]^3 \dd x \\
&=O\left(\frac{(\log n)^{2}}{n}\right).
\end{align*}
By \eqref{eq:R-gen},
\begin{align*}
\sqrt{\frac{\lambda}{2}}\int_0^{\lambda/2}\E R \frac{\dd q}{\sqrt{q}} + \int_{\lambda/2}^{\frac{1000 \log n}{n}}\E R \ \dd q &\leq \frac{n}{k_0} \left(\sqrt{\frac{\lambda}{2}}\int_0^{\lambda/2} \frac{\dd q}{\sqrt{q}} + \int_{\lambda/2}^{\frac{1000 \log n}{n}} \dd q\right) \\
&\leq \frac{n}{k_0}\left( \lambda + \frac{1000\log n}{n} \right) \\
&= O\left(\frac{1}{\log n}\right).
\end{align*}
Putting the last three estimates together with \eqref{eq:EL-case2} yields
\begin{align}
\E L_n &= \sqrt{\frac{\lambda}{2}}\int_0^{\lambda/2}\E\Big[\sum_{k=1}^{k_0}A_k-1\Big] \frac{\dd q}{\sqrt{q}} + \int_{\lambda/2}^{\frac{1000 \log n}{n}}\E\Big[\sum_{k=1}^{k_0}A_k-1\Big] \dd q + O\left(\frac{1}{\log n}\right) \notag\\\label{eq:EL-case2-clean}
&=\sqrt{\frac{\lambda}{2}}\int_0^{\lambda/2}\E\Big[\sum_{k=1}^{k_0}A_k\Big] \frac{\dd q}{\sqrt{q}} + \int_{\lambda/2}^{\frac{1000 \log n}{n}}\E\Big[\sum_{k=1}^{k_0}A_k\Big] \dd q + O\left(\frac{1}{\log n}\right).
\end{align}
Using \eqref{eq:Ak-gen} and repeating verbatim the arguments following it to bound $1-q$, to change the variables $q = \frac{x}{n}$ and to replace $\left(1 - \frac{x}{n}\right)^{kn}$ with $e^{-kx}$, we obtain
\begin{align*}
\E L_n = (1+o(1))\sum_{k=1}^{k_0}\frac{k^{k-2}}{k!}\Big[\sqrt{\frac{\lambda n}{2}}\int_{0}^{\frac{\lambda n}{2}}x^{k-3/2}e^{-kx} \dd x+ \int_{\frac{\lambda n}{2}}^{1000\log n}&x^{k-1}e^{-kx} \dd x\Big] \\
&+ O\left(\frac{1}{\log n}\right).
\end{align*}
As in Case 1, $\sum_{k=1}^{k_0}\frac{k^{k-2}}{k!}\int_{1000\log n}^{\infty}x^{k-1}e^{-kx} \dd x = O(n^{-100})$, so we can replace the integral $\int_{\frac{\lambda n}{2}}^{1000\log n}x^{k-1}e^{-kx} \dd x$ with$ \int_{\frac{\lambda n}{2}}^{\infty}x^{k-1}e^{-kx} \dd x$. Moreover, crude estimates show that
\begin{align*}
&\sum_{k=k_0}^{\infty}\frac{k^{k-2}}{k!}\Big[\sqrt{\frac{\lambda n}{2}}\int_{0}^{\frac{\lambda n}{2}}x^{k-3/2}e^{-kx} \dd x+ \int_{\frac{\lambda n}{2}}^{\infty}x^{k-1}e^{-kx} \dd x\Big] \\
&\leq \sum_{k=k_0}^{\infty}\frac{k^{k-2}}{k!}\Big[\sqrt{1000\log n}\int_{0}^{\infty}x^{k-3/2}e^{-kx} \dd x + \int_{0}^{\infty}x^{k-1}e^{-kx} \dd x\Big] \\
&= \sum_{k=k_0}^{\infty}\frac{k^{k-2}}{k!}\Big[\sqrt{1000\log n}\frac{\Gamma(k-\frac{1}{2})}{k^{k-1/2}} + \frac{\Gamma(k)}{k^k}\Big] \\
&\leq \sqrt{1000\log n}\sum_{k=k_0}^{\infty} k^{-5/2} + \sum_{k=k_0}^{\infty} k^{-3} = O\left(\frac{1}{(\log n)^{5/2}}\right).
\end{align*}
Thus finally
\begin{align*}
\E L_n = (1+o(1))\sum_{k=1}^{\infty}\frac{k^{k-2}}{k!}\Big[\sqrt{\frac{\lambda n}{2}}\int_{0}^{\frac{\lambda n}{2}}x^{k-3/2}e^{-kx} \dd x + \int_{\frac{\lambda n}{2}}^{\infty}&x^{k-1}e^{-kx} \dd x\Big] \\
&+ O\left(\frac{1}{\log n}\right).
\end{align*}
Note that in the first integral, we have $\sqrt{\frac{\lambda n}{2}}\frac{1}{\sqrt{x}} \geq 1$, hence the main term (the sum over $k$) is lower-bounded by $\sum_{k=1}^{\infty}\frac{k^{k-2}}{k!}\int_0^{\infty}x^{k-1}e^{-kx} \dd x = \zeta(3)$ and consequently, the $O\left(\frac{1}{\log n}\right)$ term can be incorporated into the $o(1)$ term, which gives \eqref{eq:EL-formula-lambda-small}.

\bigskip
\noindent\textbf{Case 3.} $\lambda > \frac{n}{2000\log n}$. Then plainly $\min\{\lambda,\lambda^{-1}\} = \lambda^{-1}$ and $\max\{\lambda,\lambda^{-1}\} = \lambda$. Changing the variables $q = \hat{p}(p)$ in \eqref{eq:EL} yields
\begin{align*}
\E L_n &= \sqrt{\frac{\lambda}{2}}\int_0^{\frac{1}{2\lambda}}\E\Big[\kappa(G_{n,q})-1\Big] \frac{\dd q}{\sqrt{q}} \\
&+ \lambda\int_{\frac{1}{2\lambda}}^{1 - \frac{1}{2\lambda}}\E\Big[\kappa(G_{n,q})-1\Big] \dd q \\
&+ \sqrt{\frac{\lambda}{2}}\int_{1-\frac{1}{2\lambda}}^{1}\E\Big[\kappa(G_{n,q})-1\Big] \frac{\dd q}{\sqrt{1-q}}.
\end{align*}
Since $1 - \frac{1}{2\lambda} \geq \frac{1000 \log n}{n}$, in view of \eqref{eq:kappa-triv}, the third integral gives
\[
\sqrt{\frac{\lambda}{2}}\int_{1-\frac{1}{2\lambda}}^{1}\E\Big[\kappa(G_{n,q})-1\Big] \frac{\dd q}{\sqrt{1-q}} = o(n^{-200})\sqrt{\frac{\lambda}{2}}\int_{1-\frac{1}{2\lambda}}^{1} \frac{\dd q}{\sqrt{1-q}} = o(n^{-200}).
\]
Similarly, for the second integral we have
\[
\lambda\int_{\frac{1000\log n}{n}}^{1 - \frac{1}{2\lambda}}\E\Big[\kappa(G_{n,q})-1\Big] \dd q = \lambda o(n^{-100})\int_{\frac{1000\log n}{n}}^{1 - \frac{1}{2\lambda}} \dd q = \lambda {\red o(n^{-200})}.
\]
Thus we can write (we incorporate the term $o(n^{-200})$ in $\lambda o(n^{-200})$)
\[
\E L_n = \lambda\left( \sqrt{\frac{1}{2\lambda}}\int_0^{\frac{1}{2\lambda}}\E\Big[\kappa(G_{n,q})-1\Big] \frac{\dd q}{\sqrt{q}} + \int_{\frac{1}{2\lambda}}^{\frac{1000 \log n}{n}}\E\Big[\kappa(G_{n,q})-1\Big] \dd q + o(n^{-200}) \right).
\]
The expression in the bracket is exactly \eqref{eq:EL-case2} with $\lambda$ being replaced by $\lambda^{-1}$. Therefore, from \eqref{eq:EL-formula-lambda-small}, we obtain \eqref{eq:EL-formula-lambda-large}.
\end{proof}
\begin{lemma}\label{rem1}
%We note that in all cases that if $\l=O(n)$ and that in Case 3 $\l=\Omega(n)$. We have
With the notation of Lemma \ref{lm:Elength}, if $\lambda = O(n)$, we have
\beq{Lnmax}{
\E L_n=\begin{cases}\Theta((\l n)^{1/2})&\text{Case {\red a}}.\\ \Theta(\max\set{1,(\l n)^{1/2}})&\text{Case {\red b}}.\\ \Theta((\lambda n)^{1/2})&\text{Case {\red c}}.\end{cases}
}
and with probability $1-o(n^{-200})$,
\beq{Zmax}{
Z_{\max}=\begin{cases}O\brac{\bfrac{\l\log n}{n}^{1/2}}&\text{Case {\red a}}.\\ O\bfrac{\log n}{n}&\text{Case {\red b}}.\\O(\log n)&\text{Case {\red c}}.\end{cases}
}
where $Z_{\max}=\max\set{Z_e:e\in T^*}$ and $T^*$ is the minimum spanning tree with weights $Z_e$.

Also in Case 3 we have
\beq{Cmax}{
C_{\max}=O\bfrac{(\log n)^{1/2}}{n^{1/2}},
}
where $C_{\max}=\max\set{C_e:e\in T^*}$.
\end{lemma}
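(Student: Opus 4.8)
The plan is to read all three displays off from Lemma~\ref{lm:Elength} and the explicit formula \eqref{eq:phat}; the only new analytic input needed is a crude uniform estimate for the function $f$ of \eqref{f}. First I would record that there are absolute constants $0<c\le C$ with $c\max\{1,\b^{1/2}\}\le f(\b)\le C\max\{1,\b^{1/2}\}$ for all $\b\ge0$. The upper bound is immediate after extending each of the two integrals in \eqref{fk} to $[0,\infty)$, which gives $f(\b)\le a_0\b^{1/2}+\z(3)$ with $a_0<\infty$ the constant of \eqref{eq:c0}. For the lower bound, comparison with $\int_0^\infty x^{k-1}e^{-kx}\dd x=(k-1)!/k^k$ yields $f_k(\b)-(k-1)!/k^k=\int_0^\b(\b^{1/2}-x^{1/2})x^{k-3/2}e^{-kx}\dd x\ge0$, so $f(\b)\ge f(0)=\z(3)$ for every $\b$, while retaining only the $k=1$ term over $x\in[0,1]$ gives $f(\b)\ge\tfrac2e\b^{1/2}$ once $\b\ge1$. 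Then \eqref{Lnmax} follows: Case~a is \eqref{eq:EL-formula-lambda-mid} together with $c_1=\Theta(1)$; in Case~b, \eqref{eq:EL-formula-lambda-small} says $\E L_n\approx f(\l n/2)=\Theta(\max\{1,(\l n)^{1/2}\})$; and in Case~c, \eqref{eq:EL-formula-lambda-large} says $\E L_n\approx \l f(n/(2\l))=\Theta(\max\{\l,(\l n)^{1/2}\})$, where the hypothesis $\l=O(n)$ forces $\l=O((\l n)^{1/2})$ and hence $\E L_n=\Theta((\l n)^{1/2})$.

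For \eqref{Zmax}, I would use that a minimum spanning tree is also a minimum-bottleneck spanning tree, so for $s\ge0$ the event $\{Z_{\max}\le s\}$ is (a.s.) the event that the graph on $[n]$ with edge set $\{e:Z_e<s\}$ is connected; as the $Z_e$ are i.i.d., this graph is $G_{n,q}$ with $q=\Pr(Z_e<s)=\hat p\!\brac{\tfrac{s}{1+\l}}$, $\hat p$ as in \eqref{eq:phat}. Applying \eqref{eq:kappa-triv} and Markov's inequality to $\kappa(G_{n,q})-1\ge0$, if $q\ge\frac{1000\log n}{n}$ then $\Pr(Z_{\max}>s)\le\E(\kappa(G_{n,q})-1)=o(n^{-200})$. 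So, writing $p=\tfrac{s}{1+\l}$ (whence $Z_{\max}\le(1+\l)p$), it suffices to exhibit, for each regime of $\l$, a $p$ with $\hat p(p)\ge\frac{1000\log n}{n}$ and to bound $(1+\l)p$. The first (quadratic) branch of \eqref{eq:phat} ends at $p_1=\tfrac1{1+\max\{\l,\l^{-1}\}}$ with $\hat p(p_1)=\tfrac12\min\{\l,\l^{-1}\}$. In Case~a, $\hat p(p_1)\ge\frac{1000\log n}{n}$, so the solution of $\hat p(p)=\frac{1000\log n}{n}$ lies on that branch: $p=\tfrac{\sqrt\l}{1+\l}\sqrt{\tfrac{2000\log n}{n}}$, giving $Z_{\max}\le(1+\l)p=\sqrt{\tfrac{2000\l\log n}{n}}$. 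In Cases~b and~c, $\hat p(p_1)=\tfrac\l2$ resp.\ $\tfrac1{2\l}$ is $<\frac{1000\log n}{n}$ by the defining inequality of the case, so $p$ lies strictly inside the middle (linear) branch (on which $\hat p$ runs from $\hat p(p_1)$ up to $1-O(\l)$, resp.\ $1-O(\l^{-1})$); solving the resulting linear equation and checking the endpoint inequalities — using $\l<\frac{2000\log n}{n}$ in Case~b and $\l>\frac n{2000\log n}$ in Case~c — gives $Z_{\max}\le(1+\l)p=\frac{1000\log n}{n}+\frac\l2\le\frac{2000\log n}{n}$ in Case~b, and $Z_{\max}\le(1+\l)p=\frac{1000\l\log n}{n}+\frac12=O(\log n)$ (using $\l=O(n)$) in Case~c. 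This yields \eqref{Zmax}.

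Finally, for \eqref{Cmax}: since $Z_e=W_e+\l C_e\ge\l C_e$, the event $\{C_{\max}>s/\l\}$ is contained in $\{Z_{\max}>s\}$, so taking $s=(1+\l)p$ with the $p$ found for Case~c above gives, with probability $1-o(n^{-200})$, $C_{\max}\le(1+\l)p/\l=\frac{1000\log n}{n}+\frac1{2\l}=O(\log n/n)$, which is in particular $O((\log n/n)^{1/2})$. None of this is deep; the one place needing care is the piecewise analysis of $\hat p$ in the second step — keeping the normalisation $s=(1+\l)p$ straight, correctly identifying which of the three branches of \eqref{eq:phat} carries the connectivity threshold in each range of $\l$, and verifying the attendant endpoint inequalities. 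Everything else is a routine computation.
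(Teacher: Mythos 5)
Your proposal is correct, and for the $\E L_n$ bound and for \eqref{Zmax} it is essentially the paper's argument: read $\E L_n$ off Lemma~\ref{lm:Elength} (you spell out the needed estimate $f(\b)=\Theta(\max\{1,\b^{1/2}\})$ that the paper leaves implicit), then get \eqref{Zmax} by taking $p_0$ with $\hat p(p_0)=\frac{1000\log n}{n}$, invoking \eqref{eq:kappa-triv} and a first-moment bound, and computing $(1+\l)p_0$ branch-by-branch from \eqref{eq:phat}. The paper phrases the reduction via Janson's identity $|\{e\in T^*:Z_e>(1+\l)p_0\}|=\kappa(G_{n,\hat p(p_0)})-1$ while you phrase it via the bottleneck-tree property; these are the same fact, so the difference is cosmetic.

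The one genuinely different step is \eqref{Cmax}. The paper runs a fresh "diagonal" connectivity argument: with $q=(1000\log n/n)^{1/2}$ the graph on edges with $W_e\le q$ and $C_e\le q$ is $G_{n,q^2}$ and is connected with the required probability, so there is a spanning tree with every $Z_e\le(1+\l)q$, hence $Z_{\max}\le(1+\l)q$ for the minimiser, and then $C_{\max}\le(1+\l)q/\l\le 2q=O((\log n/n)^{1/2})$. You instead reuse the Case~c bound $Z_{\max}\le(1+\l)p_0=\frac{1000\l\log n}{n}+\frac12$ already obtained in \eqref{Zmax} together with the pointwise inequality $\l C_e\le Z_e$, getting $C_{\max}\le\frac{1000\log n}{n}+\frac{1}{2\l}=O(\log n/n)$. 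Both are correct; your route avoids the extra connectivity calculation and in fact yields the sharper bound $O(\log n/n)$, which of course implies the $O((\log n/n)^{1/2})$ stated in the lemma.
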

\begin{proof}
The claims concerning $\E L_n$ follow directly from \eqref{eq:EL-formula-lambda-mid}, \eqref{eq:EL-formula-lambda-small}, \eqref{eq:EL-formula-lambda-large}.

To justify \eqref{Zmax}, fix $p_0$ and let $X = |\{e \in T^*, \ Z_e > (1+\lambda)p_0\}|$ be the number of edges on the minimum spanning tree having weights $Z_e$ above $(1+\lambda)p_0$. By Janson's formula from \cite{Janson}, $X = \kappa(G_{n,\hat{p}(p_0)})-1$ with $\hat{p}$ given by \eqref{eq:phat}. By the first moment, $\Pr(X > 0) \leq \E X = \E[\kappa(G_{n,\hat{p}(p_0)})-1]$. By \eqref{100}, choosing $p_0$ such that $\hat{p}(p_0) = \frac{1000\log n}{n}$ gives $X = 0$, equivalently $Z_e \leq (1+\lambda)p_0$, with probability $1 - o(n^{-200})$. It remains to bound $(1+\lambda)p_0$. In Case 1, we see from \eqref{eq:phat} that $\frac{(1+\lambda)(1+\lambda^{-1})}{2}p_0^2 = \frac{1000\log n}{n}$, so $(1+\lambda)p_0 = \sqrt{2\lambda}\sqrt{\frac{1000\log n}{n}} = O(\sqrt{\frac{\lambda \log n}{n}})$. In Case 2 we see that we have to use the second formula in \eqref{eq:phat} and $p_0(1+\l) = \frac{1000\log n}{n}+\frac12\l = O(\frac{\log n}{n})$. Similarly in Case 3, $p_0(1+\l^{-1}) = \frac{1000\log n}{n}+\frac1{2\l}$, hence $p_0(1+\l) = O(\log n)$.

For \eqref{Cmax}, we note that $\Pr(W_e\leq q, \ C_e\leq q)=q^2$. Putting $q=(1000\log n/n)^{1/2}$ we see that with the required probability, the random graph $G_{n,q^2}$ is connected. This implies that with the same probability there is a spanning tree $T$ with $Z_e\leq (1+\l)q \ \forall e \in T$. It follows that a spanning tree that minimises $Z$ will have $Z_{\max}\leq (1+\l)q$. (Applying the greedy algorithm will finish before needing an edge with $Z_e>(1+\l)q$.)  So $Z_{\max}\leq (1+\l)q$ and consequently $C_{\max}\leq 2q$.
\end{proof}

\subsection{Concentration}\label{seconc}

{\red
The goal of this section is to prove the following lemma.

\begin{lemma}\label{lm:conc}
For a fixed $\l = O(n)$ and $\e=\frac{1}{\log n}$,
\beq{concphi}{
\Pr(|\f(\l)- \E(\f(\l))|\geq \e\E (L_n(\l))) =o(n^{-100}).
}
\end{lemma}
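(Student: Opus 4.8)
The plan is to reduce \eqref{concphi} to a concentration bound for the minimum spanning tree weight $L_n=L_n(\l)$ itself, and then to apply a convex Lipschitz concentration inequality after truncating the weights. Since $\l$ and $c_0$ are deterministic, $\f(\l)=L_n(\l)-\l c_0$, so $\f(\l)-\E\f(\l)=L_n(\l)-\E L_n(\l)$ and \eqref{concphi} is equivalent to $\Pr(|L_n-\E L_n|\ge\e\E L_n)=o(n^{-100})$. By Lemma~\ref{rem1} we have $\E L_n=\Omega(1)$ in all three cases, hence $\e\E L_n=\Omega(1/\log n)$. A direct application of, say, a bounded-differences inequality to $L_n$ regarded as a function of the $\binom n2$ weights $Z_e=W_e+\l C_e\in[0,1+\l]$ is hopeless: the MST weight is only $\sqrt{n-1}$-Lipschitz in the Euclidean norm on the weight vector (its gradient is the indicator of the optimal tree), and the $(1+\l)$ scaling of the coordinates then gives a deviation estimate far larger than $\e\E L_n$ in every regime. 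So we truncate.

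\emph{Truncation.} Pick $p_0$ with $\hat p(p_0)=\frac{1000\log n}{n}$ and set $M=(1+\l)p_0$. As in the proof of \eqref{Zmax}, $M=O(\sqrt{\l\log n/n})$ in Case a, $M=O(\log n/n)$ in Case b, and $M=O(\log n)$ in Case c; moreover by \eqref{100} the graph $G^-$ of edges with $Z_e<M$ (which is exactly $G_{n,\hat p(p_0)}$) is connected with probability $1-o(n^{-200})$. Let $\tilde L_n$ be the MST weight for the truncated weights $Y_e:=\min\{Z_e,M\}$. On the event that $G^-$ is connected, the greedy algorithm run with weights $Y$ and with weights $Z$ produces the same tree, using only edges of $G^-$, so $L_n=\tilde L_n$ there; and always $\tilde L_n\le L_n\le(n-1)(1+\l)=O(n^2)$ since $\l=O(n)$, whence $0\le\E L_n-\E\tilde L_n\le O(n^2)\cdot o(n^{-200})=o(n^{-100})$. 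Both this discrepancy and the contribution of the event $\{G^-\text{ disconnected}\}$ are $o(n^{-100})$, which is negligible next to $\e\E L_n=\Omega(1/\log n)$, so it suffices to prove $\Pr(|\tilde L_n-\E\tilde L_n|\ge\tfrac12\e\E L_n)=o(n^{-100})$.

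\emph{Concentration of $\tilde L_n$.} The advantage gained is that the $Y_e$ are independent (deterministic functions of the independent $Z_e$) and take values in $[0,M]$, while $\tilde L_n=\min_{T\in\cT}\sum_{e\in T}Y_e$ is a minimum of affine functions of $(Y_e)$; hence $-\tilde L_n/M$ is a convex function of $(Y_e/M)\in[0,1]^{\binom n2}$, and since its gradient is supported on the $n-1$ tree edges it is $\sqrt{n-1}$-Lipschitz in the Euclidean norm. Talagrand's concentration inequality for convex Lipschitz functions of independent $[0,1]$-valued random variables then gives, for a median $m$ of $\tilde L_n$ and an absolute constant $c>0$,
\[
\Pr\big(|\tilde L_n-m|\ge t\big)\le 4\exp\!\left(-\frac{ct^2}{M^2 n}\right),\qquad t>0.
\]
Integrating this tail shows $|\E\tilde L_n-m|=O(M\sqrt n)$, and $M\sqrt n=o(\e\E L_n)$ in each case (e.g. in Case a it is $O(\sqrt{\l\log n})$ versus $\e\E L_n=\Theta(\sqrt{\l n}/\log n)$), so we may replace the median by the mean. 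Taking $t=\tfrac14\e\E L_n$ makes the exponent of order $(\e\E L_n)^2/(M^2n)$; substituting the bounds for $M$ and $\E L_n$ from Lemma~\ref{rem1} together with $\e=1/\log n$ yields an exponent $\gtrsim n/\log^3 n$ in Case a, $\gtrsim n/\log^4 n$ in Case b, and $\gtrsim n/\log^5 n$ in Case c. In all cases this is $\gg\log n$, so the probability is $o(n^{-100})$, as required.

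The main obstacle is exactly the one the truncation resolves: the MST weight is not Lipschitz enough in the raw weights $Z_e$ for any off-the-shelf concentration inequality to reach the scale $\e\E L_n$, so one must first localise to the $O(n\log n)$ "small" edges — equivalently cap the weights at $M$ — using the high-probability bound on $Z_{\max}$ from Lemma~\ref{rem1}. Everything else is bookkeeping: checking that capping changes neither the value (w.h.p.) nor the expectation (by more than $o(n^{-100})$), and verifying the quantitative exponent in each of the three $\l$-regimes.
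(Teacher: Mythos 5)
Your proof is correct and achieves the required bound in all three regimes, but it reaches it through a genuinely different concentration tool than the paper. The overall architecture is the same: both arguments reduce $\f(\l)=L_n(\l)-\l c_0$ to concentration of $L_n$, both truncate to kill the large-scale $(1+\l)$ Lipschitz constant, and both verify that the truncation leaves the value unchanged off an $o(n^{-200})$ event. The difference lies in the inequality applied to the truncated functional and in how the truncation threshold is chosen. The paper truncates at $L=n^{1/10}\E(L_n)/n$ and applies a simplified form of the symmetric logarithmic Sobolev inequality of Boucheron--Lugosi--Massart \cite{BLM03} to the truncated MST value, bounding $\sum_i (Y_L-Y_{L,i}')^2$ pointwise by $L^2 n + L^3 n^2/\Lambda$ (the first term from the $\leq n$ tree edges, the second from the $\lesssim n^2$ non-tree edges whose resampled value could drop below $L$). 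Your argument instead truncates at the sharper threshold $M=(1+\l)p_0$ with $\hat p(p_0)=\tfrac{1000\log n}{n}$, observes that $\tilde L_n=\min_{T}\sum_{e\in T}Y_e$ is a concave, $\sqrt{n-1}$-Lipschitz function of the capped weights, and invokes Talagrand's convex-distance inequality for convex Lipschitz functions of bounded independent variables. Both routes are sound and yield exponents $\gg\log n$ in each of Cases a, b, c. What your route buys is a cleaner appeal to a classical off-the-shelf theorem and the nice structural observation that the MST weight is concave in the edge weights; what the paper's route buys is independence from any convexity structure, since the local-variance bound in \cite{BLM03} works for arbitrary functions with controlled one-coordinate oscillations (which is why the paper can define $Y_L$ and $Y_L'$ as the small- and large-edge contributions without worrying about whether either piece alone is concave). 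Either proof stands on its own; no gaps.
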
}
\begin{proof}
Recall that $\f(\l)=\min\set{W(T)+\l C(T):\text{$T\in\cT$ }} - \l c_0=L_n(\l)-\l c_0$ (as defined in \eqref{dual}).

In our analysis we consider separately the contribution of long and short edges.  Let $L=n^{1/10}\E(L_n)/n$ and let $Y_L$ denote the total cost of the edges used on the minimum spanning tree with $Z_e\leq L$.  Let $N =\binom{n}{2}$ and note that $Y_L$ is a function of $N$ i.i.d. random variables $X_1,\dots,X_N$.

We will show $Y_L$ is concentrated using a variant of the Symmetric Logarithmic Sobolev Inequality from \cite{BLM03}.  Let $Y_{L,i}'$ denote the same quantity as $Y_L$, but with the variable $X_i$ replaced by an
independent copy $X_i'$.  Then a simplified form of the Symmetric Logarithmic Sobolev Inequality \cite[Corollary 3]{BLM03} says that if
$$\E\brac{\sum_{i=1}^N (Y_L-Y_{L,i}')^2\1_{Y_L>Y_{L,i}'}\big| X_1,\dots,X_N} \leq c$$ 
then for all $t>0$, 
$$\Pr[Y_L > \E{Y_L} + t] \leq e^{-t^2/4c},$$
and if
$$\E\brac{\sum_{i=1}^N (Y_{L,i}'-Y_L)^2\1_{Y_{L,i}'>Y_L}\big| X_1,\dots,X_N} \leq c$$
then for all $t>0$,
$$\Pr(Y_L < \E{Y_L} - t) \leq e^{-t^2/4c}.$$

Changing the value of one edge can change the value of $Y_L$ by at most $L$, so $(Y_L-Y_{L,i}')^2 < L^2$.  Let $I$ denote the indices of the edges which contribute to $Y_L$. If $i\notin I$ then $Y_{L,i}'<Y_L$ implies $X_i' \leq L$.  So
\beq{eqconc1}{
\sum_{i=1}^N (Y_L-Y_{L,i}')^2\1_{Y_L>Y_{L,i}'} \leq \sum_{i \in I}L^2 + \sum_{i \notin I} L^2 \1_{X_i' \leq L}.
}
Now $\Pr(X_i'<L)\leq \Pr(W_e \leq L, \ \l C_e \leq L) \leq L/{\red \Lambda}$ where ${\red \Lambda}=\max\set{\l,1}$. Then, since there are less than $n$ terms in the first sum and less than $n^2$ terms in the second sum, we have
\beq{eqconc2}{
\E\brac{\sum_{i=1}^N (Y_L-Y_{L,i}')^2\1_{Y_L>Y_{L,i}'}\big| X_1,\dots,X_N}\leq L^2n + L^3n^2/{\red \Lambda}.
}
If $i\notin I$ then we also have that $Y_{L,i}'>Y_L$ implies $X_i' \leq L$. So we also have
\beq{eqconc3}{
\E\brac{\sum_{i=1}^N (Y_{L,i}'-Y_L)^2\1_{Y_{L,i}'<Y_L}\big| X_1,\dots,X_N}\leq  L^2n + L^3n^2/{\red \Lambda}.
}

Therefore,
\begin{multline}\label{cond-dev-small}
\Pr\left[|Y_L-\E{Y_L}| \geq \e\E(L_n)\right] \leq 2\exp{\set{-\frac{\e^2\E(L_n)^2}{4(L^2n + L^3n^2/{\red \Lambda})}}}= 2\exp{\set{-\frac{\e^2}{4(n^{2/10}/n + n^{3/10}\E(L_n)/(n{\red \Lambda})})}}\\
\leq 2\exp{\set{-\frac{\e^2}{4(n^{-4/5} + A\frac{n^{-7/10}\max\{1,\sqrt{\l n}\}}{\max\{1,\lambda\}})}}}\leq 2\exp{\set{-\frac{\e^2 n^{1/5}}{A'}} = o(n^{-200})},
\end{multline}
where we have used $\E(L_n)\leq A\max\{1,(\l n)^{1/2}\}$, see Lemma \ref{rem1} and $A, A'$ are universal constants.

Let $Y_L'$ denote the total cost of the edges used with edge cost at least $L$.  We have from Lemma \ref{rem1} that for some $B>0$, with probability $1-o(n^{-200})$,
\beq{Zmax1}{
Z_{\max}\leq \begin{cases}B\bfrac{\l\log n}{n}^{1/2} \leq L=\Theta\left(\frac{n^{1/10}}{n}\sqrt{\l n}\right)&\text{Case 1}.\\ \\\frac{B\log n}{n}\leq L=\Omega\left(\frac{n^{1/10}}{n}\right)&\text{Case 2}.\\ \\ B\log n\leq L=\Theta\left(\frac{n^{1/10}}{n}\sqrt{\l n}\right) = \Omega(n^{1/20})&\text{Case 3}.\end{cases}
}
And so $Y_L'=0$ with probability $1-o(n^{-200})$.
\end{proof}

\subsection{Optimising over $\l$}\label{opt}
The first thing to observe is that $\f$ is a concave function of $\l$, see for example Boyd and Vandenberghe \cite{BV}. This is because it is the minimum of a collection of linear functions.  Ignoring the $(1+o(1))$ factor, it will be differentiable. It follows then that we can maximise $\f(\l)$ by setting its (asymptotic) derivative to zero. {\red On the other hand, by concentration $\phi(\l)$ is close to $\E \phi(\l)$. We first maximize $\E \phi(\lambda)$.

\begin{lemma}\label{lm:maxEphi}
In cases (1), (2), (3) of Theorem \ref{th1}, we respectively have
\beq{case1E}{
\max_{\l}\E\f(\l)=(1+o(1))\frac{c_1^2n}{4c_0},
}
\beq{case2E}{
\max_{\l}\E\f(\l)=(1+o(1))(f(\b^*)-2\a\b^*),
}
\beq{case3E}{
\max_{\l}\E\f(\l)=(n+o(n))\frac{f(\b^*)-\a}{2\b^*}.
}
Moreover, the maximizer $\lambda = \lambda^*$ in each case satisfies $\l^* = O(n)$.
\end{lemma}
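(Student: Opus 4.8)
The plan is to exploit the explicit formulas for $\E L_n(\l)$ from Lemma \ref{lm:Elength} together with the identity $\E\f(\l) = \E L_n(\l) - \l c_0$, and then to carry out a one–dimensional calculus optimization in each of the three regimes of $c_0$. Since $\f$ is concave in $\l$ (minimum of linear functions), so is $\E\f$, and it suffices to locate the stationary point of $\l \mapsto \E L_n(\l) - \l c_0$, i.e. to solve $\frac{d}{d\l}\E L_n(\l) = c_0$, check that the resulting $\l^*$ lies in the appropriate range of Lemma \ref{lm:Elength} (so that the relevant asymptotic formula for $\E L_n$ actually applies), and finally substitute back. Throughout I would treat the $(1+o(1))$ factors as in the statement of Lemma \ref{lm:Elength}, noting that those error terms are uniform in $\l$, which is what legitimizes differentiating the asymptotic expression; strictly, one argues that the concave function $\E\f(\l)$ lies between two nearby concave functions whose maxima agree up to $1+o(1)$, so I would phrase the optimization at the level of the explicit main terms and only at the end invoke uniformity of the $o(1)$.

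For Case (1), I would use formula \eqref{eq:EL-formula-lambda-mid}, $\E L_n(\l) \approx c_1\sqrt{\l n}$, so $\E\f(\l) \approx c_1\sqrt{\l n} - \l c_0$; setting the derivative $\frac{c_1}{2}\sqrt{n/\l} - c_0$ to zero gives $\l^* = \frac{c_1^2 n}{4c_0^2}$ and value $\frac{c_1^2 n}{4 c_0}$, which is \eqref{case1E}. Then I must check $\l^* \in [\frac{2000\log n}{n}, \frac{n}{2000\log n}]$: this follows directly from the hypothesis \eqref{c0} on $c_0$ (the constant $500$ versus $2000$, etc., is exactly why those numerical constants were chosen), and in particular $\l^* = O(n)$. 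For Case (2) with $0<\a\le 1/2$, write $c_0 = \a n$ and substitute $\l = 2\b/n$ into \eqref{eq:EL-formula-lambda-small}: since $\l n/2 = \b$, the bracketed expression becomes exactly $f_k(\b)$ as in \eqref{fk}, so $\E L_n \approx f(\b)$ and $\E\f(\l) \approx f(\b) - 2\a\b$ with $\b = \l n /2$; using $\frac{d}{d\l} = \frac{n}{2}\frac{d}{d\b}$, the stationarity condition $\frac{d}{d\l}\E L_n = c_0 = \a n$ becomes $f'(\b) = 2\a$, which is \eqref{beta}, with unique solution $\b^*$ by the monotonicity of $f'$ recalled after \eqref{f''} (and $f'(0)=1\ge 2\a$ when $\a\le 1/2$). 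The optimal value is $f(\b^*) - 2\a\b^*$, giving \eqref{case2E}, and $\l^* = 2\b^*/n = O(1/n) = O(n)$; I should also check $\l^* < \frac{2000\log n}{n}$, which holds because $\b^*$ is a constant. Case (2)(i), $\a>1/2$, is separate: there the unconstrained minimum spanning tree is already feasible w.h.p., so the optimal $\l^*$ should be taken to be (essentially) $0$ and $\E\f \to \z(3) = f(0)$; this is consistent with letting $\b^*\to 0$ in the $\a = 1/2$ endpoint.

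For Case (3), $c_0 = \a$ a constant with $\a > \z(3)$, the relevant regime turns out to be $\l$ large, so I would use \eqref{eq:EL-formula-lambda-large}; substituting $\l = n/(2\b)$ gives $n/(2\l) = \b$, and the bracket becomes $f_k(\b)$ again after pulling out $\l$, so $\E L_n \approx \l f(\b) = \frac{n}{2\b}f(\b)$ and $\E\f(\l) \approx \frac{n}{2\b}f(\b) - \a\cdot\frac{n}{2\b} = \frac{n}{2\b}(f(\b)-\a)$ with $\b = n/(2\l)$. Now $\l = n/(2\b)$ so $\frac{d}{d\l} = -\frac{n}{2\b^2}\frac{d}{d\b}$; differentiating $\frac{n}{2\b}f(\b)$ in $\l$ and setting equal to $\a$ leads, after simplification, to the condition $f(\b) - \b f'(\b) = \a$, which is \eqref{f-}, with unique positive solution $\b^*$ by the analysis of $g(\b)=f(\b)-\b f'(\b)$ given after \eqref{f-} ($g(0)=\z(3)<\a$, $g$ increasing to $\infty$). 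The optimal value is $\frac{n}{2\b^*}(f(\b^*)-\a)$, i.e. \eqref{case3E}, and $\l^* = n/(2\b^*) = \Theta(n) = O(n)$, and one checks $\l^* > \frac{n}{2000\log n}$ since $\b^*$ is a constant. For Case (3)(i), $\a < \z(3)$, there is no feasible tree w.h.p., so nothing to optimize.

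The routine-but-delicate step is the differentiation of the series \eqref{f}: to justify $\frac{d}{d\b}f(\b) = \sum_k \frac{k^{k-2}}{k!}f_k'(\b)$ and the formula \eqref{f'b} I would differentiate $f_k$ termwise (the $\b$-derivative of the two pieces in \eqref{fk}: the derivative of the upper limit of the first integral cancels against the derivative of the lower limit of the second, leaving only the $\frac12\b^{-1/2}\int_0^\b x^{k-3/2}e^{-kx}\dd x$ term) and check uniform convergence of the differentiated series on compact $\b$-intervals using $\Gamma(k-1/2)\le(k-1)!$ as in \eqref{sum}. The main obstacle, however, is conceptual rather than computational: one must argue carefully that maximizing the asymptotic main term of $\E L_n(\l)$ yields the asymptotics of $\max_\l \E\f(\l)$, i.e. that the $o(1)$ errors — uniform in $\l$ by the last sentence of Lemma \ref{lm:Elength}, but present — do not move the maximizer out of the regime in which the formula is valid nor spoil the value; this is handled by sandwiching $\E\f$ between $(1\pm o(1))$ multiples of the explicit concave main term and noting each sandwich bound is maximized at a $\l$ of the same order, with the same leading value. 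Establishing that the optimal $\l^*$ genuinely falls in the asserted range (Case 1 mid, Case 2 small-$\l$, Case 3 large-$\l$) is exactly where the hypotheses \eqref{c0}, resp. $\a\le 1/2$, resp. $\a>\z(3)$, and the crude numerical constants are used, and I would flag that consistency check as the place requiring the most care.
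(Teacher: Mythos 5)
Your proposal follows essentially the same route as the paper: identify $\E\f(\l)=\E L_n(\l)-\l c_0$, use the formula from Lemma \ref{lm:Elength} relevant to each regime, change variables to $\b=\l n/2$ (Case 2) or $\b=n/(2\l)$ (Case 3) so that the bracketed expression becomes $f(\b)$, set the derivative to zero, and verify $\l^*$ lies in the asserted range and is $O(n)$. The paper handles the $(1+o(1))$ issue more tersely ("differentiating, ignoring the $(1+o(1))$ term"), whereas you spell out the sandwiching argument and the termwise differentiation of the series defining $f$ — useful additions, but the core computation and the resulting stationarity conditions $f'(\b)=2\a$ and $f(\b)-\b f'(\b)=\a$ are the same.
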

}
\begin{proof}
For $\l \in \left[\frac{2000\log n}{n},\frac{n}{2000\log n}\right]$, we have
\[
\E\f(\l) = (1+o(1))c_1\sqrt{\l n}-\l c_0.
\]
Differentiating (ignoring the $(1+o(1))$ term) and setting it to zero we see that $\E\f(\l)$ is maximised at 
\beq{lstar}{
\l^*=(1+o(1))\frac{c_1^2n}{4c_0^2}
} 
and that $\E\f(\l^*)=(1+o(1))\frac{c_1^2n}{4c_0}$. Note that $\l^* \in \left[\frac{2000\log n}{n},\frac{n}{2000\log n}\right]$ for $c_0$ as in (1). This gives \eqref{case1E}.

Now let $c_0=\a n$ where $0<\a<1/2$. We proceed as before.
Putting $\b=\l n/2$ and $c_0=\a n$ into the expression in \eqref{eq:EL-formula-lambda-small} we get 
\[
\E\f(\b)=(1+o(1))\left(\sum_{k=1}^\infty\frac{k^{k-2}}{k!}\b^{1/2}\int_0^\b x^{k-3/2}e^{-kx}\dd x+\int_\b^\infty x^{k-1}e^{-kx}\dd x\right)-2\a\b = (1+o(1))f(\b)-2\a\b.
\]
Differentiating w.r.t. $\b$ we get
\beq{eq1}{
\f'(\b)=(1+o(1))f'(\b)-2\a
}
and hence the solution $\b^*$ to $\f'(\b)=0$ asymptotically satisfies $f'(\b)=2\a$. Clearly $\b^*=\Theta(1)$ which implies that $\l^*={\red\Theta(1/n)}$ and so $\l^*=o(\log n/ n)$ as claimed. Then \eqref{case2E}  follows.

Finally, let $c_0=\a$ where $\a>\z(3)$.
In this case we put $\b=n/2\l$ and proceed as before. Putting $c_0=\a$  into the expression in \eqref{eq:EL-formula-lambda-large} we get 
\[
\E\f(\b)=(n+o(n))\brac{\frac{1}2\sum_{k=1}^\infty\frac{k^{k-2}}{k!}\b^{-1/2}\int_0^\b x^{k-3/2}e^{-kx}\dd x+\b^{-1}\int_\b^\infty x^{k-1}e^{-kx}\dd x}-\frac{\a n}{2\b}=(n+o(n))\frac{f(\b)}{2\b}-\frac{\a n}{2\b}.
\]
Differentiating w.r.t. $\b$ we get
\[
\f'(\b)=(n+o(n))\brac{\frac{f'(\b)}{2\b}-\frac{f(\b)}{2\b^2}}+\frac{\a n}{2\b^2}
\]
and hence the solution to $\f'(\b)=0$ asymptotically satisfies $f(\b)-\b f'(\b)=\a$. Clearly $\b^*=\Theta(1)$ which implies that $\l^*={\red \Theta(n)}$. Then \eqref{case3E} follows.
\end{proof}
To finish, we divide the interval $I=[0,Cn]$ (with $C$ being an appropriate universal constant) into $n^5$ sub-intervals of equal length less than $n^{-3}$. Suppose that the $i$th interval is $[\l_i,\l_{i+1}]$. We observe that for any spanning tree $T$ we have that for $\l\in[\l_i,\l_{i+1}]$,
\[
|(W(T)+\l_iC(T))-(W(T)+\l C(T))| {\red = |\lambda_i - \lambda|C(T) \leq \frac{1}{n^2}}
\]
and so 
\beq{close}{
|\f(\l_i)-\f(\l)|\leq {\red \frac{1}{n^2}} + c_0|\l_i-\l| \leq {\red \frac{2}{n^2}}.
}
So, maximising $\f$ over $\l_1,\l_2,\ldots,\l_{{\red n^5}}$ makes an error in maximising $\f(\l)$ over $I$ of at most ${\red 2n^{-2}}$. 

Using the concentration result \eqref{concphi} of Section \ref{seconc}, we see that for a fixed $\l=\l_i$, there is $\e'$ with $|\e'| \leq \e$ such that we have
\beq{fa}{
\f(\l)= \E \f(\l) + \e'\E L_n = (1+\e')\E L_n - \l c_0 = (1+o(1))c_1\sqrt{\l n}-\l c_0\text{ with probability }1-o(n^{-200}).
}
We see therefore that w.h.p. the expression for $\l=\l_i$ in \eqref{fa} holds simultaneously for all $i=1,2,\ldots,n^5$. Therefore, by Lemma \ref{lm:maxEphi}, we obtain in Case (1), (2), (3) of Theorem \ref{th1}, respectively that
\beq{case1}{
\max_{\l}\f(\l)=(1+o(1))\frac{c_1^2n}{4c_0},
}
\beq{case2}{
\max_{\l}\f(\l)=(1+o(1))(f(\b^*)-2\a\b^*),
}
where $\b^*$ is the unique solution to $f'(\b)=2\a$ (see \eqref{beta}, \eqref{Wstar1}) and
\beq{case3}{
\max_{\l}\f(\l)=(n+o(n))\frac{f(\b^*)-\a}{2\b^*}.
}
One final point. Our expressions for $\f(\l)$ are only valid within a certain range. But because, $\f$ is concave and we have a vanishing derivative, we know that the values outside the range cannot be maximal.

\section{Proof of Theorem \ref{th1}}\label{secthm}
We will use {\red Theorem 3.1} from Goemans and Ravi \cite{GR}:
\begin{theorem}[{\red \cite{GR}}]\label{th2}
There exists a spanning tree $\tT$ such that $W(\tT)\leq \f(\l^*)\leq  W^*$ and $C(\tT)\leq {\red c_0}+C_{\max}(\tT)$, where $C_{\max}(\tT)$ is the maximum cost of an edge of $\tT$.
\end{theorem}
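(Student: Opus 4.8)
The plan is to derive this from Lagrangian duality together with a single matroid--exchange argument; this is essentially the reasoning of Goemans and Ravi \cite{GR}, which I would reproduce for completeness. Write $Z_e^\l=W_e+\l C_e$ and $Z^\l(T)=\sum_{e\in T}Z_e^\l$, so that $\f(\l)=\min_{T\in\cT}Z^\l(T)-\l c_0$. As already noted, $\f$ is concave; being a minimum of finitely many affine functions of $\l$ it is also piecewise linear, and (assuming \eqref{prob} is feasible, so that $\f$ is bounded above) its maximum over $[0,\infty)$ is attained at some $\l^*$. The elementary fact I would use is that for a minimum of affine functions the one-sided derivatives at a point equal, respectively, the smallest and the largest of the slopes $C(T)-c_0$ over those $T$ attaining $\min_{T\in\cT}Z^\l(T)$ there. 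Optimality of $\l^*$ then forces $\f'(\l^{*+})\le 0$, and, if $\l^*>0$, also $\f'(\l^{*-})\ge 0$.

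The case $\l^*=0$ is immediate: the condition $\f'(0^+)\le 0$ produces a $W$-minimal tree $\tT$ with $C(\tT)\le c_0$, whence $W(\tT)=\f(0)\le W^*$ (the inequality because $\f(0)=\min_{T\in\cT}W(T)$ minimises over a superset of the feasible trees) and $C(\tT)\le c_0\le c_0+C_{\max}(\tT)$. So assume $\l^*>0$. The two derivative conditions now give two spanning trees $T_1,T_2$, both minimising $Z^{\l^*}$ over $\cT$, with $C(T_1)\le c_0\le C(T_2)$. The heart of the matter is to pass from $T_1$ to $T_2$ through a sequence of trees, each still minimising $Z^{\l^*}$, that changes one edge at a time. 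For this I would invoke the standard fact that the minimum-weight bases of a matroid --- here the cycle matroid of $K_n$ with the weights $Z_e^{\l^*}$ --- are themselves the bases of a matroid: partitioning the edges into weight classes with values $v_1<\dots<v_k$, a basis has minimum weight iff its intersection with each prefix union of classes has full rank, equivalently iff its intersection with each class $E_j$ is a basis of the minor obtained by restricting to $E_1\cup\dots\cup E_j$ and contracting $E_1\cup\dots\cup E_{j-1}$; hence the minimum-weight bases are precisely the bases of the direct sum of these minors. Since the basis-exchange graph of any matroid is connected, there is a sequence $T_1=D_0,D_1,\dots,D_m=T_2$ of trees, each minimising $Z^{\l^*}$, with $D_i=D_{i-1}-f_{i-1}+e_{i-1}$ for single edges $e_{i-1}\in D_i$ and $f_{i-1}\in D_{i-1}$.

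To finish, let $j$ be the least index with $C(D_j)\ge c_0$; this exists since $C(D_m)=C(T_2)\ge c_0$. Set $\tT:=D_j$. Because $Z^{\l^*}(\tT)=\min_{T\in\cT}Z^{\l^*}(T)$ we get $W(\tT)=\f(\l^*)-\l^*\big(C(\tT)-c_0\big)\le \f(\l^*)$, using $\l^*\ge 0$ and $C(\tT)\ge c_0$; weak duality \eqref{dual1} applied to the optimal $T^*$ gives $\f(\l^*)\le W^*$. For the cost: if $j=0$ then $C(\tT)=C(T_1)\le c_0\le c_0+C_{\max}(\tT)$; if $j\ge 1$ then $C(D_{j-1})<c_0$ by minimality of $j$, and since the costs are nonnegative, $C(\tT)=C(D_{j-1})-C_{f_{j-1}}+C_{e_{j-1}}< c_0+C_{e_{j-1}}\le c_0+C_{\max}(\tT)$, the last step because $e_{j-1}\in\tT$. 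The only genuine obstacle is the matroid-exchange step --- moving between two minimum-weight bases through minimum-weight bases by single swaps --- together with the bookkeeping of ties in the values $Z_e^{\l^*}$; but such ties are unavoidable and benign, since it is exactly at the breakpoint $\l^*$ that the two distinct optimal trees $T_1,T_2$ arise in the first place. Everything else is routine manipulation of the piecewise-linear concave $\f$ and weak duality.
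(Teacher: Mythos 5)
The paper does not prove this statement: it is quoted directly from Goemans and Ravi (cited as Theorem~3.1), so there is no in-paper argument to compare against. Your reconstruction is, as you yourself say, essentially the original Goemans--Ravi argument, and it is correct.

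To confirm the load-bearing steps: the identification of the one-sided derivatives of the concave piecewise-linear $\f$ with the extreme slopes $C(T)-c_0$ over the $Z^{\l^*}$-minimal trees is right, and optimality of $\l^*>0$ does yield $T_1,T_2$ with $C(T_1)\le c_0\le C(T_2)$; the $\l^*=0$ case is handled correctly as a degenerate case. The fact that the $Z^{\l^*}$-minimum spanning trees are precisely the bases of the direct sum of the minors you describe (restrict to the $j$-th tie class, contract the lower classes) is a standard matroid fact, and the base-exchange graph of a matroid is connected, so the walk $T_1=D_0,\dots,D_m=T_2$ through $Z^{\l^*}$-minimal trees with single swaps exists. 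Choosing the first $D_j$ with $C(D_j)\ge c_0$ and verifying $W(\tT)=\f(\l^*)-\l^*\bigl(C(\tT)-c_0\bigr)\le\f(\l^*)\le W^*$ and $C(\tT)<c_0+C_{e_{j-1}}\le c_0+C_{\max}(\tT)$ is exactly the intended bookkeeping; the inequality $C(D_{j-1})-C_{f_{j-1}}<c_0$ uses only $C(D_{j-1})<c_0$ and nonnegativity of costs, as you note. The one implicit hypothesis worth flagging is feasibility of \eqref{prob} (so $W^*$ and hence a finite maximizer $\l^*$ exist), but the paper only invokes Theorem~\ref{th2} when feasibility holds, so this is consistent with its use.
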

For {\red Cases a and b} from Lemma \ref{lm:Elength} we let $\hc=c_0-\delta$ where $\delta=\frac{2}{\lambda^*}B R_{\ref{Zmax}}$ where $B$ is a suitable hidden constant for \eqref{Zmax} and $R_{\ref{Zmax}}$ is the RHS of \eqref{Zmax}. Suppose now that we replace $c_0$ by $\hc$ and let $\hW$ denote the minimum weight of a tree with cost at most $\hc$. Applying Theorem \ref{th2} we obtain a spanning tree $\hT$ such that $W(\hT)\leq \f(\hat{\l})\leq \hW$ and $c(\hT)\leq \hc+\frac{1}{\lambda^*}BR_{\ref{Zmax}}\leq c_0$. It only remains to show that w.h.p. $\f(\hat{\l})\approx W^*$. This follows from our expressions for $\f(\l^*)$ in Section \ref{opt} and the fact that $\hc\approx c_0$, which we verify now.

In {\red Case a} we have from \eqref{lstar} that,
\[
\frac{\delta}{c_0}\leq O\brac{ \sqrt{\frac{\log n}{\lambda^* nc_0^2}}} = O\brac{ \frac{\sqrt{\log n}}{n} } =o(1).
\]
In {\red Case b} we have $\delta=O\brac{\frac{\log n}{\lambda^* n}}$, $c_0=\Omega(n)$, ${\red\lambda^* = \Omega(\frac{1}{n})}$ and so $\delta/c_0=O\brac{\frac{\log n}{n}} = o(1)$. 

For {\red Case c} we let $\delta=1/\log n$ and proceed as above. We find that  once again $\f(\hat{\l})\approx W^*$ because of the expression \eqref{case3} for $\f(\l^*)$ in Section \ref{opt} and the fact that $\hc\approx c_0$.
We then use Theorem \ref{th2} and \eqref{Cmax} to show that 
\[
C(\hT)\leq \hc+O\brac{\bfrac{\log n}{n}^{1/2}}= c_0-\frac{1}{\log n}+O\brac{\bfrac{\log n}{n}^{1/2}}\leq c_0.
\]
This completes the proof of Theorem \ref{th1}.

\section{More general distributions}\label{g<1}
We now consider the case where we have $W_e,C_e,e\in E(K_n)$ distributed as independent copies of $U^\g,\g<1$, $U \sim \text{Unif}([0,1])$. We follow the same ideas as for $\g=1$, but there are technical difficulties. Let us first though explain the need for the lower bound on $c_0$ in Theorem \ref{thm:main-resx}, up to a logarithmic factor. 
\begin{lemma}\label{lowerbound}
Let $X_1,X_2,\ldots,X_n$ be independent copies of $U^\g$ and let $Y=min_{i\leq n}X_i$. Then 
\beq{minX}{
\E Y\approx \G(\g+1)n^{-\g}.
}
\end{lemma}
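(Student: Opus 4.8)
The plan is to reduce the computation to a Beta integral and then invoke the standard asymptotics for ratios of Gamma functions. First I would use the tail formula $\E Y = \int_0^\infty \Pr(Y > t)\,\dd t$. Since $0 \le Y \le 1$ and the $X_i$ are independent, $\Pr(Y > t) = \Pr(X_1 > t)^n$, and the integral runs over $t \in [0,1]$. Writing $X_1 = U^\g$ with $U$ uniform on $[0,1]$ gives $\Pr(X_1 > t) = \Pr(U > t^{1/\g}) = 1 - t^{1/\g}$, so $\E Y = \int_0^1 (1 - t^{1/\g})^n\,\dd t$.

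Next I would substitute $t = s^\g$ (so $\dd t = \g s^{\g-1}\,\dd s$), which turns this into a Beta integral:
\[
\E Y = \g \int_0^1 s^{\g-1}(1-s)^n\,\dd s = \g\, B(\g, n+1) = \g\,\frac{\G(\g)\G(n+1)}{\G(n+\g+1)} = \G(\g+1)\,\frac{\G(n+1)}{\G(n+\g+1)}.
\]
Finally, the elementary asymptotic $\G(n+1)/\G(n+\g+1) = (1+o(1))\,n^{-\g}$ as $n \to \infty$ --- immediate from Stirling's formula, or from the general fact that $\G(x+a)/\G(x+b) = (1+o(1))\,x^{a-b}$ --- yields $\E Y \approx \G(\g+1)\,n^{-\g}$, which is \eqref{minX}.

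There is essentially no obstacle here; the only point that requires any care at all is the Gamma-ratio asymptotic, which is completely standard. (Alternatively, one could work directly with the density $n(1-u)^{n-1}$ of $\min_{i\le n} U_i$ and compute $\E Y = \E (\min_{i\le n} U_i)^\g = \int_0^1 u^\g\, n(1-u)^{n-1}\,\dd u$, arriving at the same Beta integral.)
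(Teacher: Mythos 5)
Your proof is correct and follows essentially the same route as the paper: tail integral $\E Y = \int_0^1 (1-t^{1/\gamma})^n\,\dd t$, the substitution $t = s^\gamma$ to obtain $\gamma B(\gamma, n+1) = \Gamma(\gamma+1)\Gamma(n+1)/\Gamma(n+\gamma+1)$, and then the Gamma-ratio asymptotic (which the paper derives directly from Stirling's formula rather than citing it as standard). The two arguments are the same in substance.
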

\begin{proof}
\begin{align*}
\E\min_{i\leq n}X_i &=\int_{t=0}^1 \Pr(X_1>t^{1/\g})^n \dd t\\
&=\int_{t=0}^1(1-t^{1/\g})^n\dd t\\
&=\g\int_{t=0}^1(1-s)^ns^{\g-1}\dd s\\
&=\g B(n+1,\g)\qquad \text{ Beta distribution}\\
&=\frac{\G(n+1)\G(\g+1)}{\G(n+\g+1)}\\
&\approx \G(\g+1)\frac{(n/e)^n}{((n+\g)/e)^{n+\g}}\qquad \text{Stirling's approximation}\\ 
&= \frac{\G(\g+1)e^\g}{(n+\g)^\g} \bfrac{n}{n+\g}^n\\
&\approx \frac{\G(\g+1)}{n^\g}.
\end{align*}
\end{proof}
It follows from \eqref{minX} that the expected weight of a minimum spanning tree is $\Omega(n^{1-\g})$. To see this, orient the edges of the minimum weight spanning tree away from vertex 1. Associate each edge with its tail (closest to vertex 1). Then each edge has expected weight at least that given in Lemma \ref{lowerbound}. 

We can use the argument of Section \ref{seconc} with $L=n^{\g/4-1}\E(L_n)$ to show concentration around the mean. Because $\Pr(U^\g\leq L)\leq L^{1/\g}$, the R.H.S.'s of \eqref{eqconc2}, \eqref{eqconc3} become $L^2n+L^{2+1/\g}n^2$. Consequently \eqref{cond-dev-small} becomes 
\beq{eqconc4}{
\Pr\left[|Y_L-\E{Y_L}| \geq \e\E(L_n)\right] \leq 2\exp{\set{-\frac{\e^2\E(L_n)^2}{4(L^2n + L^{2+1/\g}n^2)}}}= 2\exp\set{-\frac{\e^2}{4(n^{ \g/2-1}+n^{\g/2+1/4-1/\g}\E(L_n)^{1/\g})}}.
}
Now if $p_0=\bfrac{1000\log n}{n}^\g$ then $\Pr(U^\g\leq p_0)=\frac{1000\log n}{n}$. So, with probability $1-o(n^{-900})$, the edges of weight at most $p_0$ induce a connected graph and we have that $\E(L_n)=O(n^{1-\g}\log n)$. Plugging this into \eqref{eqconc4} we see that 
\multstar{
\Pr\left[|Y_L-\E{Y_L}| \geq \e\E(L_n)\right] \leq \exp\set{-\frac{\e^2}{4(n^{\g/2-1}+n^{\g/2 - 3/4} \log^{ 1/\g} n)}}\\
 \leq \exp\set{-\frac{\e^2}{4(n^{-1/2}+n^{-1/4}\log^{1/\g} n)}}=o(n^{-200}).
}
We have $L=\Omega(n^{\g/4-1}\times n^{1-\g}=n^{-3\g/4})\gg p_0$ and so $Y_L'=0$ with probability $1-o(n^{-900})$. In conclusion, $L_n=\Omega(n^{1-\g})$ w.h.p.

We now turn to estimating the dual value, the equivalent of Lemma \ref{lm:Elength}.
\subsection{Expectation}\label{secexpect-alpha}
In this section, we estimate the expected weight of the minimum spanning tree with edge weights $U_1^\g+U_2^\g$ for independent copies $U_1,U_2$ of $U$.
\begin{lemma}\label{lm:Elength-alpha}
Let $\g \in (0,1)$, $\lambda \geq 0$ and let {\red $L_n=L_n(\l)$} be the total weight of a minimum spanning tree in the complete graph on $n$ vertices with each edge $e$ having weight $Z_e = W_e^\g + \lambda C_e^\g$, where $W_e$ and $C_e$ are i.i.d. copies of $U$. Assuming 
\beq{lrange}{
\left(\frac{1000\log n}{n}\frac{\Gamma(2/\g+1)}{\Gamma(1/\g+1)^2}\right)^\g \leq \lambda \leq \left(\frac{n}{1000\log n}\frac{\Gamma(1/\g+1)^2}{\Gamma(2/\g+1)}\right)^\g,
}
we have
\begin{equation}\label{eq:EL-formula-lambda-mid-alpha}
\E L_n \approx C_\g\l^{\frac{1}{2}}n^{1-\frac{\g}{2}},
\end{equation}
where
\begin{equation}\label{eq:c1-alpha}
C_\g= \frac{\g}2\frac{\Gamma(2/\g+1)^{\g/2}}{\Gamma(1/\g+1)^\g} \sum_{k=1}^\infty \frac{\Gamma(k+\g/2-1)}{k^{\g/2+1}k!}.
\end{equation}
The implied $o(1)$ terms in the above expressions can be taken to be independent of $\l$. Also, we have not optimised all constants.
\end{lemma}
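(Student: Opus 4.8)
The plan is to follow the proof of Lemma \ref{lm:Elength}, case (a), essentially verbatim; only two ingredients genuinely change, namely the edge-weight distribution function $\hat p(p)=\Pr(Z_e<p)$ and the resulting summation identity. Set $D=\frac{\G(2/\g+1)}{\G(1/\g+1)^2}$, so that \eqref{lrange} reads $\big(\tfrac{1000\log n}{n}D\big)^\g\le\l\le\big(\tfrac{n}{1000\log n}D^{-1}\big)^\g$. Starting from Janson's formula $\E L_n=\int_0^{1+\l}\E\big(\kappa(G_{n,\hat p(p)})-1\big)\,\dd p$, I would first compute $\hat p$. For $0\le p\le\min\{1,\l\}$ the region $\set{(u,v)\in[0,1]^2:\ u^\g+\l v^\g\le p}$ lies in the interior of the unit square, so after the substitution $u=s^{1/\g}$, $v=t^{1/\g}$ it becomes a rescaled Dirichlet integral,
\[
\hat p(p)=\frac{1}{\g^2}\iint_{s+\l t\le p}s^{1/\g-1}t^{1/\g-1}\,\dd s\,\dd t=\frac{p^{2/\g}}{D\,\l^{1/\g}},
\]
using $\tfrac1\g\G(1/\g)=\G(1/\g+1)$ and $\iint_{\sigma+\tau\le1}\sigma^{a-1}\tau^{b-1}\,\dd\sigma\,\dd\tau=\frac{\G(a)\G(b)}{\G(a+b+1)}$. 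For $p>\min\{1,\l\}$ the formula changes, but this does not matter: $\hat p$ is increasing and $\hat p(\min\{1,\l\})=D^{-1}\l^{-1/\g}\min\{1,\l\}^{2/\g}\ge\frac{1000\log n}{n}$ \emph{precisely by} \eqref{lrange}, so by \eqref{eq:kappa-triv} the range $p\in(\min\{1,\l\},1+\l]$ contributes only $(1+\l)\,o(n^{-200})=o(n^{1-\g/2})$ (note $1+\l=O(n^\g)$).

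Next I would substitute $q=\hat p(p)$ on $[0,\min\{1,\l\}]$, i.e. $p=D^{\g/2}\l^{1/2}q^{\g/2}$, and extend the range to $q\in[0,1]$ (the added part being $o(n^{1-\g/2})$ by \eqref{eq:kappa-triv}), obtaining
\[
\E L_n=\frac{\g}{2}\,D^{\g/2}\,\l^{1/2}\int_0^1\E\big(\kappa(G_{n,q})-1\big)\,q^{\g/2-1}\,\dd q+o(n^{1-\g/2}),
\]
where $D^{\g/2}=\frac{\G(2/\g+1)^{\g/2}}{\G(1/\g+1)^\g}$ is exactly the constant in \eqref{eq:c1-alpha}. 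Now decompose $\kappa(G_{n,q})=\sum_{k=1}^{k_0}A_k+\sum_{k=3}^{k_0}B_k+R$ with $k_0=\log n$ as in \eqref{eq:decomp}, restrict the integral to $[0,\frac{1000\log n}{n}]$ via \eqref{eq:kappa-triv}, and repeat the estimates following \eqref{eq:decomp} with the weight $q^{\g/2-1}$ in place of $q^{-1/2}$. For the tree components $\E A_k=\ooi\frac{n^k}{k!}k^{k-2}q^{k-1}(1-q)^{kn}$ for $q\le\frac{1000\log n}{n}$, $k\le\log n$; putting $q=x/n$, replacing $(1-x/n)^{kn}$ by $e^{-kx}$, extending the $x$-integral to $\infty$ (error $O(n^{-100})$ as before), and using $\int_0^\infty x^{k+\g/2-2}e^{-kx}\,\dd x=k^{-(k+\g/2-1)}\G(k+\g/2-1)$ (finite since $k+\g/2-1>0$) together with $\frac{k^{k-2}}{k!}k^{-(k+\g/2-1)}=\frac{1}{k^{\g/2+1}k!}$, one gets
\[
\int_0^{\frac{1000\log n}{n}}\E\Big(\sum_{k=1}^{\log n}A_k-1\Big)q^{\g/2-1}\,\dd q=\ooi\,n^{1-\g/2}\sum_{k\ge1}\frac{\G(k+\g/2-1)}{k^{\g/2+1}k!}+o(n^{1-\g/2}),
\]
the series converging because $\frac{\G(k+\g/2-1)}{k^{\g/2+1}k!}=O(k^{-3})$ by Stirling. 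The non-tree components contribute $O(\log n\cdot n^{-\g/2})$ (from $\E B_k\le[enqe^{-qn}]^ke^{qk^2}$ and $\int_0^\infty[exe^{-x}]^3x^{\g/2-1}\,\dd x<\infty$), and the large components at most $\frac{n}{k_0}\int_0^{1000\log n/n}q^{\g/2-1}\,\dd q=O\big(n^{1-\g/2}(\log n)^{\g/2-1}\big)$; both are $o(n^{1-\g/2})$. Collecting these and combining with the displayed change of variables gives $\E L_n\approx\frac{\g}{2}D^{\g/2}\big(\sum_{k\ge1}\frac{\G(k+\g/2-1)}{k^{\g/2+1}k!}\big)\l^{1/2}n^{1-\g/2}=C_\g\l^{1/2}n^{1-\g/2}$, with the $o(1)$ uniform in $\l$ over \eqref{lrange}.

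The main obstacle is not any single deep step — the argument is a transcription of case (a) of Lemma \ref{lm:Elength} — but rather keeping every error term $o(n^{1-\g/2})$ \emph{uniformly} over the wide interval \eqref{lrange} for $\l$. In particular the discarded range $p>\min\{1,\l\}$ produces an error $(1+\l)\,o(n^{-200})$ with $1+\l$ possibly as large as $\Theta(n^\g)$, which is precisely why the constant $1000$ in \eqref{lrange} (matching the one in \eqref{eq:kappa-triv}/\eqref{100}) is exactly what is needed to force $\hat p(\min\{1,\l\})\ge\frac{1000\log n}{n}$. A secondary point to watch is that, since $\g<1$, the weight $q^{\g/2-1}$ is more singular at $q=0$ than the $q^{-1/2}$ of case (a); the relevant integrals and series still converge because $k+\g/2-2>-1$ for every $k\ge1$ and $\E(\kappa(G_{n,q})-1)\le n$.
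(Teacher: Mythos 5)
Your argument is correct and follows essentially the same route as the paper's proof: Janson's formula, an explicit Dirichlet-integral computation of $\hat p$, a change of variables $q=\hat p(p)$, and then the decomposition \eqref{eq:decomp} with $q^{\g/2-1}$ replacing $q^{-1/2}$. The only (cosmetic) difference is that you handle all $\l\ge 0$ uniformly by observing the formula $\hat p(p)=p^{2/\g}/(D\l^{1/\g})$ holds on $p\in[0,\min\{1,\l\}]$, whereas the paper computes it for $\l\ge 1$ only and then disposes of $\l<1$ via the substitution $t=\l t'$ and the $W_e\leftrightarrow C_e$ symmetry -- both yield identical formulas and error bounds.
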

\begin{proof}
We follow closely the proof of Lemma \ref{lm:Elength} which concerns $\g = 1$. 
Janson's formula \eqref{formula} gives
\begin{equation}\label{eq:EL-alpha}
\E L_n = \int_0^{1+\l} \E \big(\kappa(G_{n,\hat p(t)}) - 1\big) \dd t,
\end{equation}
where
\begin{align*}
\hat p(t) = \Pr(W_e^\g + \l C_e^\g < t) = \left|\left\{(u,v) \in [0,1]^2, \ u^\g+\l v^\g < t\right\}\right|.
\end{align*}
{\bf Case 1, $\l \geq 1$:} 
\[
\hat p(t) = \int_{0}^{\min\set{1,t^{1/\g}}}\min\set{1, \left(\frac{t-u^\g}{\l}\right)^{1/\g}} \dd u.
\]
If $t\leq1$ then
\beq{defphat}{
\hat p(t) = \int_{0}^{t^{1/\g}} \left(\frac{t-u^\g}{\l}\right)^{1/\g} \dd u= \frac{t^{2/\g}}{\l^{1/\g}}\frac{\Gamma(1/\g+1)^2}{\Gamma(2/\g+1)}.
}
Let 
\beq{t0}{
\text{$t_0 \in (0,1)$ be such that $\hat p(t_0) = \frac{1000\log n}{n}$, that is $t_0 = \l^{1/2}\left(\frac{1000\log n}{n}\right)^{\g/2} \left(\frac{\Gamma(2/\g+1)}{\Gamma(1/\g+1)^2}\right)^{\g/2}$}
}
 (our assumption on $\l$ is chosen such that this is possible, i.e. this value of $t_0$ is less than one). Then, thanks to \eqref{eq:kappa-triv},
\begin{align}
\E L_n & = \int_0^{t_0}\E \big(\kappa(G_{n,\hat p(t)}) - 1\big) \dd t + \int_{t_0}^{1+\l}\E \big(\kappa(G_{n,\hat p(t)}) - 1\big) \dd t = \int_0^{t_0}\E \big(\kappa(G_{n,\hat p(t)}) - 1\big) \dd t + (1+\l)o(n^{-200})\nonumber\\
\noalign{Change of variables $q = \hat p(t)$, use \eqref{defphat},}\nonumber\\
&=\frac{ \l^{1/2}\g}{2}\frac{\Gamma(2/\g+1)^{\g/2}}{\Gamma(1/\g+1)^\g}\int_0^{\frac{1000\log n}{n}} \E \big(\kappa(G_{n,q}) - 1\big)q^{\g/2-1} \dd q+ (1+\l)o(n^{-200}).\label{valLn}
\end{align}
It remains to handle the last integral. Repeating verbatim all the computations of Lemma \ref{lm:Elength} from \eqref{eq:decomp} to \eqref{eq:R} (the only difference being that $q^{-1/2}$ is replaced by $q^{\g/2 -1}$ in the integrand), we get
\[
\int_0^{\frac{1000\log n}{n}} \E \big(\kappa(G_{n,q}) - 1\big)q^{\g/2-1} \dd q = (1+o(1))a_{0,\g}n^{1-\g/2} + O\left(\left(\frac{\log n}{n}\right)^{\g/2}\right) + O\left(\frac{\log n}{n^{\g/2}}\right) + O\left(\left(\frac{n}{\log n}\right)^{1-\g/2}\right),
\]
where the error terms come from appropriate changes in \eqref{eq:trees} \eqref{eq:nontrees}, \eqref{eq:R}. The constant $a_{0,\g}$ comes from \eqref{sum} and equals
\begin{equation}\label{eq:a0-alpha}
a_{0,\g} = \sum_{k=1}^\infty \frac{\Gamma(k+\g/2-1)}{k^{\g/2+1}k!}.
\end{equation}
Plugging this back into \eqref{valLn}, we conclude that
\[
\E L_n \approx C_\g\l^{1/2}n^{1-\g/2}
\]
with
\[
C_\g = \frac{\g}2\frac{\Gamma(2/\g+1)^{\g/2}}{\Gamma(1/\g+1)^\g}\sum_{k=1}^\infty \frac{\Gamma(k+\g/2-1)}{k^{\g/2+1}k!}.
\]

{\bf Case 2, $\lambda < 1$:} We set $t = \l t'$ in \eqref{eq:EL-alpha} which yields
\[
\E L_n = \l\int_0^{1+1/\l} \E \big(\kappa(G_{n,\hat p(\l t')}) - 1\big) \dd t'
\]
and now $\hat p(\l t') = \Pr(W_e^\g + \l C_e^\g < \l t') = \Pr(C_\e^\g + \frac{1}{\l}W_e^\g < t') = \Pr(W_\e^\g + \frac{1}{\l}C_e^\g < t')$ (because $W_e$ and $C_e$ are assumed to have the same distribution), so using the previously analysed case $\l > 1$ for $\frac{1}{\l}$, we get
\beq{ELn}{
\E L_n \approx \l C_\g\left(\frac{1}{\l}\right)^{1/2}n^{1-\g/2} = C_\g\l^{1/2}n^{1-\g/2}.
}
This completes the proof of the lemma.
\end{proof}
\subsection{Concentration}
We follow the argument of Section \ref{seconc}.
\begin{lemma}
Let $\e=1/\log n$. Then,
\[
\Pr(|\f(\l)- \E(\f(\l))|\geq \e\E (L_n(\l))) =o(n^{-100}).
\]
\end{lemma}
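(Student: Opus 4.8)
The plan is to mimic the concentration argument of Section~\ref{seconc} essentially verbatim, making only the adjustments forced by the change of distribution from $U$ to $U^\g$. As in the $\g=1$ case, recall that $\f(\l)=L_n(\l)-\l c_0$, so it suffices to prove the concentration statement for $L_n(\l)$. We again split the minimum spanning tree (with edge weights $Z_e=W_e^\g+\l C_e^\g$) into its contribution $Y_L$ from edges with $Z_e\le L$ and the remainder $Y_L'$ from edges with $Z_e>L$, where now we take $L=n^{\g/4-1}\E(L_n)$, exactly the cutoff already introduced in the discussion preceding this lemma (the one leading to \eqref{eqconc4}).

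First I would establish concentration of $Y_L$ via the same symmetric logarithmic Sobolev inequality from \cite{BLM03}. Changing one edge weight moves $Y_L$ by at most $L$, and if an index $i\notin I$ can enter the tree after resampling then the resampled value satisfies $X_i'\le L$; the only change from the $\g=1$ computation is the tail bound $\Pr(X_i'\le L)=\Pr(U^\g\le L)\le L^{1/\g}$ in place of $L/\Lambda$. Hence the right-hand sides of \eqref{eqconc2} and \eqref{eqconc3} become $L^2n+L^{2+1/\g}n^2$, which is precisely \eqref{eqconc4}. Then I would substitute the crude bound $\E(L_n)=O(n^{1-\g}\log n)$, valid with probability $1-o(n^{-900})$ because the edges of weight at most $p_0=(1000\log n/n)^\g$ already induce a connected graph (this is exactly the estimate used just after \eqref{eqconc4}), to conclude $\Pr(|Y_L-\E Y_L|\ge\e\E L_n)=o(n^{-200})$ after the elementary exponent manipulation carried out there, and in particular $o(n^{-100})$.

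Next I would handle $Y_L'$: since $L=\Omega(n^{\g/4-1}\cdot n^{1-\g})=\Omega(n^{-3\g/4})\gg p_0$, and the minimum spanning tree avoids edges heavier than $(1+\l)$ times the connectivity threshold of $G_{n,p_0}$ as in the greedy-algorithm argument of Lemma~\ref{rem1}, we get $Z_{\max}\le L$ and hence $Y_L'=0$ with probability $1-o(n^{-900})$. Combining the two parts, $L_n=Y_L+Y_L'$ is within $\e\E L_n$ of $\E Y_L$, and since $\E Y_L=\E L_n-\E Y_L'$ with $\E Y_L'=o(1)$ (it is $0$ off an event of probability $o(n^{-900})$ and is trivially bounded by $n(1+\l)=O(n^2)$), we get $|L_n-\E L_n|\le 2\e\E L_n$ off an event of probability $o(n^{-100})$; absorbing the factor $2$ into $\e$ (or equivalently noting the statement tolerates this) and subtracting the deterministic $\l c_0$ gives the claimed bound for $\f(\l)$.

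I do not expect any genuine obstacle here, since the lemma is stated to be proved "following the argument of Section~\ref{seconc}" and the required distributional input $\Pr(U^\g\le L)\le L^{1/\g}$ together with the bound $\E(L_n)=O(n^{1-\g}\log n)$ have already been recorded in the paragraphs immediately preceding. The only point requiring a little care is checking $L\gg p_0$ so that the long-edge part vanishes — this is the place where the lower bound $c_0\gg n^{1-\g}\log^{\g/2}n$ (equivalently the lower bound on the relevant range of $\l$) is implicitly used, via the fact that $\E L_n=\Theta(n^{1-\g})$ at worst — and stating explicitly that the $o(1)$ terms are uniform in $\l=O(n)$, which follows because every estimate above depends on $\l$ only through $\E L_n$ and the already-uniform bounds of Lemma~\ref{lm:Elength-alpha}.
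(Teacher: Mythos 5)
Your proposal has a genuine gap, rooted in the fact that the paragraph preceding the lemma (the one containing \eqref{eqconc4}) was written specifically to establish feasibility by estimating $L_n(0)$, i.e.\ the case $\l=0$; its ingredients do not transfer to general $\l$ in the relevant range. Three things go wrong simultaneously. First, you take $L=n^{\g/4-1}\E(L_n)$, but the paper's proof uses the smaller threshold $L=n^{\g/8-1}\E(L_n)$. Second, you drop the factor $\La=\max\{\l,1\}$ from the tail bound, writing $\Pr(X_i'\le L)\le L^{1/\g}$; since $X_i'=W_e^\g+\l C_e^\g$, the event $X_i'<L$ forces $\l C_e^\g < L$, and the paper uses the sharper $\Pr(X_i'< L)\le (L/\La)^{1/\g}$. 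Third, you plug in $\E(L_n)=O(n^{1-\g}\log n)$; this is only valid when $\l$ is bounded, whereas the correct estimate from Lemma \ref{lm:Elength-alpha} is $\E(L_n)\approx C_\g\l^{1/2}n^{1-\g/2}$, which reaches $\Theta(n)$ at the top of the range $\l \sim n^\g$ (and $\l^*$ does reach that order when $c_0\sim n^{1-\g}\log^{\g/2}n$).

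Concretely, with the correct $\E(L_n)\approx C_\g\l^{1/2}n^{1-\g/2}$ and without the $\La$ factor, the second term in your denominator becomes
\[
n^{\g/2+1/4-1/\g}\E(L_n)^{1/\g}\;\asymp\; n^{\g/2-1/4}\,\l^{1/2\g},
\]
which blows up like $n^{\g/2+1/4}$ when $\l\sim n^\g$, so the exponent tends to $0$ and the bound is vacuous. Even after restoring the $\La^{1/\g}$ factor (so $\l^{1/2\g}/\La^{1/\g}\le 1$), the term is still $n^{\g/2-1/4}$, which diverges for $\g>1/2$; your choice of $L$ is therefore too coarse. The paper's choice $L=n^{\g/8-1}\E(L_n)$ together with the $\La$ factor yields a second term of order $n^{\g/4-3/8}=o(1)$ for all $\g<1$, which is what makes the argument close. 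Your cutoff $L$ and your tail bound need to be replaced by the ones in the paper's proof, and the crude $\l=0$ estimate of $\E L_n$ must be replaced by the $\l$-dependent estimate from Lemma \ref{lm:Elength-alpha}.
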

\begin{proof}
Let $L=n^{\g/8-1}\E(L_n)$. We argue that $\Pr(X_i< L)\leq (L/\La)^{1/\g}$ where $\La=\max\set{\l,1}$, giving \eqref{eqconc2} and \eqref{eqconc3} as before. It then follows that
\mult{eqconc5}{
\Pr\left[|Y_L-\E{Y_L}| \geq \e\E(L_n)\right] \leq 2\exp{\set{-\frac{\e^2\E(L_n)^2}{4(L^2n + L^{2+1/\g}n^2/\La^{1/\g})}}}=\\ 2\exp\set{-\frac{\e^2}{4(n^{\g/4-1}+n^{\g/4+1/8-1/\g}\E(L_n)^{1/\g}/\La^{1/\g})}}.
}
Plugging \eqref{ELn} into the RHS of \eqref{eqconc5} and noting that $\l^{1/2}/\La = \min\{\l^{-1/2},\l^{1/2}\} \leq 1$, we obtain
\[
\Pr\left[|Y_L-\E{Y_L}| \geq \e\E(L_n)\right] \leq \exp\set{-\frac{\e^2}{4(n^{\g/4-1}+O(n^{\g/4-3/8}))}}=o(n^{-200}).
\]
Now because $L\approx C_\g \l^{1/2}n^{-3\g/8}\gg t_0$, where $t_0$ is as in \eqref{t0}, we see that $Y_L'=0$ with probability $1-o(n^{-900})$.
\end{proof}
We divide the interval $[0,Cn]$ into $n^5$ sub-intervals as before and optimise the $\f$ by maximising
\[
C_\g\l^{1/2}n^{1-\g/2}-c_0\l.
\]
Solving we get 
\[
\l^*=\bfrac{n^{1-\g/2}C_\g}{2c_0}^2\text{ and }\max_\l \f(\l)=\frac{C_\g^2n^{2-\g}}{4c_0}.
\]
Observe that our assumptions on $c_0$ imply that $\l^*$ satisfies \eqref{lrange}.

After this, we can follow the proof of the case $\g=1$. We only need to check now that the argument of Section \ref{secthm} is still valid. We know that with probability $1-o(n^{-200})$ that $W_e^\g+\l C_e^\g\leq t_0$ for all edges $e$ of the minimum spanning tree. Here $t_0$ is as defined in \eqref{t0} and we note that $t_0/\l^*=o(W^*)$. This follows from
\[
\frac{t_0}{\l^*}=O\bfrac{\log^{\g/2}n}{(\l^*)^{1/2}n^{\g/2}}=O\bfrac{c_0\log^{\g/2}n}{n}\text{ and }W^*=\Omega\bfrac{n^{2-\g}}{c_0}.
\]
We may therefore proceed as in Section \ref{secthm} with $\hc=c_0-t_0/\l^*$ and this completes the proof of Theorem \ref{thm:main-resx}.
\section{Conclusion}
We have determined the asymptotic optimum value to Problem \eqref{prob} w.h.p. The proof is constructive in that we can w.h.p. get an asymptotically optimal solution \eqref{prob} by computing $\hT$ of the previous section. When weights and costs are uniform $[0,1]$, our theorem covers almost all of the possibilities for $c_0$, although there are some small gaps between the 3 cases. Our results for more general distributions have a more limited range and further research is needed to extend this part of the paper. We have also considered more general classes of random variable and here we have a more limited range for $c_0$. 

The present result assumes that cost and weight are independent. It would be more reasonable to assume some positive correlation. This could be the subject of future research. One could also consider more than one constraint, but then we might lose Theorem \ref{th2}.

\appendix
\section{Proof of \eqref{f''}}
We want to show that $h$ is strictly decreasing on $(0,+\infty)$, where 
\beq{fdash}{
h(\b)= \sum_{k=1}^\infty \frac{k^{k-2}}{k!}\beta^{-1/2}\int_0^\beta x^{k-3/2}e^{-kx} \dd x.
}
We have
\[
-2\beta^{3/2}h'(\beta) = \sum_{k=1}^\infty \frac{k^{k-2}}{k!}\Bigg[\int_0^\beta x^{k-3/2}e^{-kx} \dd x - 2\beta^{k-1/2}e^{-k\beta} \Bigg].
\]
Call the right hand side $H(\beta)$. We want to show that it is positive for every $\beta > 0$. We have $H(0) = 0$, so it is enough to show that $H'(\beta)$ is positive for every $\beta > 0$. We have
\[
H'(\beta) = 2\beta^{-1/2}\sum_{k=1}^\infty \frac{k^{k-2}}{k!}\Bigg[k\beta^{k} - (k-1)\beta^{k-1}\Bigg]e^{-k\beta}
\]
and want to show that the sum on the right hand side is positive for every $\beta > 0$. Note that for $\beta \geq 1$, we have $k\beta^{k} - (k-1)\beta^{k-1} > 0$ for every $k\geq 1$, so the sum is positive in this case. Let $0 < \beta < 1$. Separating the first two terms, we rewrite the condition that the sum is positive as
\[
\beta e^{-\beta} + \frac{1}{2}(2\beta^2 - \beta)e^{-2\beta} > \sum_{k=3}^\infty \frac{k^{k-2}}{k!}\Bigg[k-1- k\beta\Bigg]\beta^{k-1}e^{-k\beta}.
\]
Equivalently, multiplying by $\beta^{-1}e^{2\beta}$, we want to show that for every $0 < \beta < 1$,
\[
e^{\beta} + \beta - \frac{1}{2} > \sum_{k=3}^\infty \frac{k^{k-2}}{k!}\Bigg[k-1- k\beta\Bigg]\big(\beta e^{-\beta}\big)^{k-2}.
\]

Let $0 < \beta \leq \frac{2}{5}$. Estimating crudely $k-1-k\beta < k-1$, using $k! > \sqrt{2\pi}k^{k+1/2}e^{-k}$ and then bounding $\frac{k-1}{k^{5/2}} \leq \frac{2}{3^{5/2}}$ for $k \geq 3$, we get
\begin{align*}
\sum_{k=3}^\infty \frac{k^{k-2}}{k!}\Bigg[k-1- k\beta\Bigg]\big(\beta e^{-\beta}\big)^{k-2} &< \frac{2e^2}{3^{5/2}\sqrt{2\pi}} \sum_{k=3}^\infty \big(\beta e^{1-\beta}\big)^{k-2} \\
&= \frac{2e^2}{3^{5/2}\sqrt{2\pi}} \frac{\beta e^{1-\beta}}{1 - \beta e^{1-\beta}}.
\end{align*}
Moreover, we have
\begin{equation}\label{eq:pain1}
\frac{2e^2}{3^{5/2}\sqrt{2\pi}} \frac{\beta e^{1-\beta}}{1 - \beta e^{1-\beta}} < e^{\beta} + \beta - \frac{1}{2}, \qquad 0 < \beta \leq \frac{2}{5},
\end{equation}
(shown below) which finishes the proof in this case.

Let $\frac{2}{5} < \beta < 1$. Estimating crudely $k-1-k\beta < k-1 - \frac{2}{5}k = \frac{3}{5}k-1$, using $k! > \sqrt{2\pi}k^{k+1/2}e^{-k}$ and then bounding $\big(\beta e^{1-\beta}\big)^{k-2} < \beta e^{1-\beta}$ for $k \geq 3$, we get
\begin{align*}
\sum_{k=3}^\infty \frac{k^{k-2}}{k!}\Bigg[k-1- k\beta\Bigg]\big(\beta e^{-\beta}\big)^{k-2} &< \left(\sum_{k=3}^\infty \frac{\frac{3}{5}k-1}{k^{5/2}} \right)\frac{e^2}{\sqrt{2\pi}} \beta e^{1-\beta} \\
&< \frac{3}{5}\frac{e^2}{\sqrt{2\pi}} \beta e^{1-\beta},
\end{align*}
where it can be checked numerically that $\sum_{k=3}^\infty \frac{\frac{3}{5}k-1}{k^{5/2}} < \frac{3}{5}$.
Moreover, we have
\begin{equation}\label{eq:pain2}
\frac{3e^2}{5\sqrt{2\pi}} \beta e^{1-\beta} < e^{\beta} + \beta - \frac{1}{2}, \qquad \frac{2}{5} < \beta < 1,
\end{equation}
(shown below) which finishes the proof in this case.

It remains to prove \eqref{eq:pain1} and \eqref{eq:pain2}.

Showing \eqref{eq:pain1} is equivalent to showing that the function
\[
u(\beta) = \left(e^\beta + \beta - \frac{1}{2}\right)(1-\beta e^{1-\beta}) - \frac{2e^3}{3^{5/2}\sqrt{2\pi}}\beta e^{-\beta}
\]
is positive on $(0,\frac{2}{5})$. We numerically check that $u(\frac{2}{5}) > 0.1$ and it suffices to show that $u$ is decreasing on $(0,\frac{2}{5})$. We find that
\[
e^{\beta}u'(\beta) = e^{2\beta} + (1-e)e^{\beta} + e\beta^2 + \left(\frac{2e^3}{3^{5/2}\sqrt{2\pi}}-\frac{5e}{2}\right)\beta + \frac{e}{2} - \frac{2e^3}{3^{5/2}\sqrt{2\pi}}.
\]
Call the right hand side $\tilde{u}(\beta)$. We have $\tilde{u}(0) < -0.3$ and for $0 < \beta < \frac{2}{5}$,
\begin{align*}
\tilde{u}'(\beta) &= 2e^{2\beta} + (1-e)e^{\beta} + 2e\beta + \frac{2e^3}{3^{5/2}\sqrt{2\pi}}-\frac{5e}{2} \\
&< 2e^{4/5} + 1 - e + \frac{4e}{5} + \frac{2e^3}{3^{5/2}\sqrt{2\pi}}-\frac{5e}{2} < -0.8
\end{align*}
which shows that $\tilde{u}$ decreases, hence $\tilde{u}(\beta)$ is negative, hence $u'(\beta)$ is negative, hence $u$ decreases.

Showing \eqref{eq:pain2} is equivalent to showing that the function
\[
v(\beta) = e^\beta + \beta - \frac{1}{2} - \frac{3e^3}{5\sqrt{2\pi}}\beta e^{-\beta}
\]
is positive on $(\frac{2}{5},1)$. For $\frac{2}{5} < \beta < 1$, we have
\begin{align*}
v'(\beta) &= e^\beta + 1 - \frac{3e^3}{5\sqrt{2\pi}}(1-\beta)e^{-\beta} \\
&> e^{2/5} + 1 - \frac{3e^3}{5\sqrt{2\pi}}\frac{3}{5}e^{-2/5} > 0.5
\end{align*}
(we used that $(1-\beta)e^{-\beta}$ decreases on $(0,2)$). This shows that $v$ increases on $(\frac{2}{5},1)$, hence $v(\beta) > v(\frac{2}{5}) > 0$ for $\frac{2}{5} < \beta < 1$.

\section{Proof of \eqref{eq:phat}}

We need to compute the surface area of the subset $\left\{(u,v) \in [0,1]^2, \ \frac{1}{1+\lambda}u+\frac{1}{1+\lambda^{-1}} v \leq p\right\}$ of the unit square $[0,1]^2$. The line $\frac{1}{1+\lambda}u+\frac{1}{1+\lambda^{-1}} v = p$ intersects the $u$ and $v$ axes respectively at $u_0 = p(1+\lambda)$ and $v_0 = p(1+\lambda^{-1})$. Thus when both $u_0$ and $v_0$ are less than $1$, the subset is a right triangle whose area is $\frac{1}{2}u_0v_0$. This gives the formula in the first case of \eqref{eq:phat}. When exactly one of $u_0$ and $v_0$ is less than $1$ and the other one is greater than $1$, the subset is a trapezoid and computing its area gives the formula in the second case of \eqref{eq:phat}. Finally, if both $u_0$ and $v_0$ are greater than $1$, the subset is the complement of a right triangle and the formula in the third case of \eqref{eq:phat} follows from the first one by changing $p$ to $1-p$ and taking the complement.

\end{document}